\documentclass[a4paper,12pt]{article}

\usepackage{amsmath}
\usepackage{amssymb}
\usepackage{amsthm}
\usepackage{amsfonts}
\usepackage{mathtools}
\usepackage{ascmac}
\usepackage{spalign}
\usepackage{fancybox}
\usepackage[mathscr]{eucal}
\usepackage{footnpag}
\usepackage{siunitx}
\usepackage{multicol}
\usepackage{bbm}
\usepackage{framed}
\usepackage{here}

\usepackage{graphicx}
\usepackage{xcolor}
\usepackage{wrapfig}
\usepackage{float}

\theoremstyle{definition}
\newtheorem{theorem}{Theorem}[section]
\newtheorem*{theorem*}{Theorem}
\newtheorem{definition}{Definition}[section]
\newtheorem*{definition*}{Definition}
\newtheorem{proposition}{Proposition}[section]
\newtheorem*{proposition*}{Proposition}
\newtheorem{lemma}{Lemma}[section]
\newtheorem*{lemma*}{Lemma}
\newtheorem{corollary}{Corollary}[section]
\newtheorem*{corollary*}{Corollary}
\newtheorem{remark}{Remark}[section]
\newtheorem*{remark*}{Remark}

\newfont{\bg}{cmr9 scaled\magstep4}
\newcommand{\bigzero}{\smash{\lower1.0ex\hbox{\bg 0}}}

\title{Edge-defect matrices and stability of the Kirchhoff index
for complete graphs with deleted edges}

\author{
Shunya Tamura\thanks{
Okegawa City Okegawa West Junior High School,
Saitama, 363-0027, Japan, 
ORCID: 0009-0000-2162-2536; 
e-mail: \texttt{shunya.tamura059@gmail.com}
}
}

\date{}

\begin{document}
\maketitle
%%%%%%%%%%%%%%%%%%%%%%%%%%%%%%%%%%%%%%%%
\begin{abstract}
%%%%%%%%%%%%%%%%%%%%%%%%%%%%%%%%%%%%%%%%

In this paper, we study the effective resistance, the Kirchhoff index,
and the number of spanning trees of the connected graph $K_n-F$,
which is obtained from the complete graph by deleting a set $F$ of $p$ edges.
Let $B$ be the incidence matrix of the deleted edges.
We call the matrix $Q=B^TB$ the edge-defect matrix.
This is a $p\times p$ matrix which records, with signs,
the way in which the deleted edges share their end vertices.

First, we derive a formula for the effective resistance between
any two distinct vertices in terms of the resolvent of the edge-defect matrix.
This reduces the usual computation using the $n\times n$ Laplacian matrix
to a computation using a $p\times p$ matrix corresponding to the number
of deleted edges.
Moreover, by using the eigenvalues of the same matrix,
we give unified formulas for the Kirchhoff index and the number of spanning trees.

Next, we derive a stability identity which exactly describes the excess
from the Xu--Das--Zhang type lower bound.
As a consequence, we show that, in the range where a matching deletion
can be realized, the Kirchhoff index is minimized when the deleted edge set
is a matching.
Furthermore, by using majorization, we prove that, for $p\ge 2$ and
$n\ge \max\{4,2p-1\}$, among all non-matching deleted edge sets,
the minimum is attained only when the deletion graph is isomorphic to
$P_3\cup(p-2)K_2$.
Finally, we apply the obtained formulas to several deletion graphs,
such as matchings, stars, cliques, paths, and cycles.

%%%%%%%%%%%%%%%%%%%%%%%%%%%%%%%%%%%%%%%%
\end{abstract}
%%%%%%%%%%%%%%%%%%%%%%%%%%%%%%%%%%%%%%%%

%%%%%%%%%%%%%%%%%%%%%%%%%%%%%%%%%%%%%%%%
\noindent
{\bf Keywords:} effective resistance, Kirchhoff index, spanning tree,
edge deletion, Laplacian spectrum, majorization.

\noindent
{\bf 2020 Mathematics Subject Classification:}
Primary 05C50; Secondary 05C30, 05C12, 15A18.

%%%%%%%%%%%%%%%%%%%%%%%%%%%%%%%%%%%%%%%%
\section{Introduction}
\label{sec:introduction}
%%%%%%%%%%%%%%%%%%%%%%%%%%%%%%%%%%%%%%%%

Let $L(G)$ be the Laplacian matrix of a connected graph $G$,
and let $L(G)^+$ be its Moore--Penrose inverse.
When each edge is regarded as a unit resistor, the effective resistance
between two distinct vertices $x,y\in V(G)$ is given by
\[
R_G(x,y)
=
(e_x-e_y)^TL(G)^+(e_x-e_y).
\]
This quantity was introduced by Klein and Randi\'c \cite{KleinRandic1993}
as the resistance distance.
The sum of effective resistances over all unordered pairs of vertices,
\[
Kf(G)
=
\sum_{\{x,y\}\in\binom{V(G)}{2}}R_G(x,y),
\]
is called the Kirchhoff index of $G$.
Let $G$ be a connected graph with $n$ vertices, and let
\[
0=\mu_1<\mu_2\le\cdots\le\mu_n
\]
be the Laplacian eigenvalues of $G$.
Then
\[
Kf(G)
=
n\sum_{i=2}^{n}\frac{1}{\mu_i}
\]
holds \cite{GutmanMohar1996}.
Also, by Kirchhoff's Matrix-Tree Theorem, the number of spanning trees
$\tau(G)$ of $G$ is given by
\[
\tau(G)
=
\frac{1}{n}\prod_{i=2}^{n}\mu_i
\]
\cite{Kirchhoff1847,Bapat2010}.

In this paper, we study the connected graph
\[
G=K_n-F
\]
obtained from the complete graph $K_n$ by deleting a set of edges
\[
F=\{e_1,\ldots,e_p\}\subseteq E(K_n).
\]
Since
\[
R_{K_n}(x,y)=\frac{2}{n}
\]
holds for the complete graph, it is natural to ask how edge deletions
affect the effective resistance, the Kirchhoff index, and the number
of spanning trees.

There are several studies on edge deletions with symmetry.
Chair \cite{Chair2012} expressed the effective resistance of $K_N-C_N$
in terms of Bejaia--Pisa numbers.
Tamura \cite{Tamura2026} and Tamura and Tanaka \cite{TamuraTanaka2026}
studied complete graphs with deleted edges corresponding to cyclic distances,
and obtained effective resistances and the number of spanning trees
by using discrete Fourier analysis.
These methods depend on the cyclic symmetry of the deleted edge set,
and therefore they cannot be directly applied to a general edge set.

We put
\[
V(F)=\bigcup_{e\in F}e,
\qquad
H_F=(V(F),F),
\]
and call $H_F$ the deletion graph of $F$.
We give an arbitrary orientation to each edge of $F$, and let $B$ be
the corresponding incidence matrix.
Furthermore, we put
\[
Q=Q_F=B^TB
\]
and call it the edge-defect matrix.
The matrix $Q$ is the edge Laplacian of the deletion graph $H_F$.
It is a $p\times p$ matrix which records how the deleted edges share
their end vertices.

The purpose of this paper is not merely to introduce this matrix.
We show that the same small matrix $Q$ describes the effective resistance,
the Kirchhoff index, and the number of spanning trees of $K_n-F$
in a unified way.
Moreover, from the eigenvalue distribution of $Q$, we derive stability
and extremal results for the Kirchhoff index.

The first main result of this paper is the following reduction formula.
For two distinct vertices $x,y\in V(G)$, put
\[
\eta_{xy}=B^T(e_x-e_y).
\]
Let
\[
\theta_1,\ldots,\theta_p
\]
be the eigenvalues of $Q$.
Then
\[
R_{K_n-F}(x,y)
=
\frac{2}{n}
+
\frac{1}{n}
\eta_{xy}^T(nI_p-Q)^{-1}\eta_{xy},
\]
\[
Kf(K_n-F)
=
n-1+
\sum_{\alpha=1}^{p}
\frac{\theta_\alpha}{n-\theta_\alpha},
\]
and
\[
\begin{aligned}
\tau(K_n-F)
&=
n^{n-2}
\prod_{\alpha=1}^{p}
\left(
1-\frac{\theta_\alpha}{n}
\right)\\
&=
n^{n-p-2}\det(nI_p-Q)
\end{aligned}
\]
hold.
Thus the three quantities which are usually computed from the
$n\times n$ Laplacian matrix are reduced to computations using
a $p\times p$ matrix corresponding to the number of deleted edges.

The second main result is a complete stability identity for the Kirchhoff
index.
Xu, Das and Zhang \cite{XuDasZhang2016} proved that, when
\[
2\le p\le\left\lfloor\frac{n}{2}\right\rfloor,
\]
we have
\[
Kf(K_n-F)
\ge
n-1+\frac{2p}{n-2},
\]
and equality holds if and only if
\[
H_F\cong pK_2.
\]
In this paper, we refine this lower bound into the identity
\[
\begin{aligned}
Kf(K_n-F)
&=n-1+\frac{2p}{n-2}+\frac{n}{(n-2)^2}
\sum_{\alpha=1}^{p}\frac{(\theta_\alpha-2)^2}{n-\theta_\alpha}.
\end{aligned}
\]
This formula holds for connected $K_n-F$ without imposing an upper bound
on the number of deleted edges, and it exactly describes the excess from
the Xu--Das--Zhang type lower bound.
Moreover, the correction term vanishes if and only if
\[
Q=2I_p,
\]
which is equivalent to
\[
H_F\cong pK_2.
\]

The third main result is the determination of the sharp minimum,
together with the equality condition, among all non-matching deleted
edge sets.
Palacios \cite{Palacios2015} studied the ranking of graphs with diameter
$2$ which have small Kirchhoff indices.
In this paper, we use majorization
\cite{MarshallOlkinArnold2011,Palacios2018}
for the eigenvalue vector of the edge-defect matrix.
We prove that, if $p\ge2$ and
\[
n\ge\max\{4,2p-1\},
\]
then, whenever $H_F$ is not a matching,
\[
Kf(K_n-F)
\ge
n-1
+
\frac{3}{n-3}
+
\frac{1}{n-1}
+
\frac{2(p-2)}{n-2}.
\]
Moreover, equality holds if and only if
\[
H_F\cong P_3\cup(p-2)K_2.
\]
In other words, among all non-matching deleted edge sets, the Kirchhoff
index is minimized by the structure in which exactly one pair of deleted
edges shares a vertex.
This extends the result of Palacios \cite{Palacios2015} for $p=2$,
and the known ranking for $p=3$ in \cite{XuDasZhang2016},
to arbitrary $p$.

Furthermore, when $n\ge2p$, the difference between the minimum attained
by a matching and the minimum attained by a non-matching deleted edge set is
\[
\frac{2n}{(n-1)(n-2)(n-3)}.
\]
This value is independent of the number $p$ of deleted edges, and gives
the first stability gap.

This paper is organized as follows.
In Section \ref{sec:preliminaries}, we prepare the necessary notation,
matrix formulas, and results on majorization.
In Section \ref{sec:resistance-formula}, we prove the resolvent formula
for the effective resistance.
In Section \ref{sec:kirchhoff-spanning}, we derive the reduction formulas
for the Kirchhoff index and the number of spanning trees.
In Section \ref{sec:stability}, we prove the stability identity.
In Section \ref{sec:first-gap}, we determine the minimum among
non-matching deleted edge sets and the first stability gap.
In Section \ref{sec:examples}, we apply our formulas to typical deletion
graphs.
Finally, in Section \ref{sec:conclusion}, we give the conclusion and future
problems.

%%%%%%%%%%%%%%%%%%%%%%%%%%%%%%%%%%%%%%%%
\section{Preliminaries}
\label{sec:preliminaries}
%%%%%%%%%%%%%%%%%%%%%%%%%%%%%%%%%%%%%%%%

Unless otherwise stated, all graphs in this paper are finite, simple,
and undirected.
For a graph $G$, we denote its vertex set and edge set by
$V(G)$ and $E(G)$, respectively.

%%%%%%%%%%%%%%%%%%%%%%%%%%%%%%%%%%%%%%%%
\subsection{Laplacian matrix and effective resistance}
%%%%%%%%%%%%%%%%%%%%%%%%%%%%%%%%%%%%%%%%

Let $G$ be a graph with $n$ vertices.
We denote its adjacency matrix and degree matrix by $A(G)$ and $D(G)$,
respectively.
The Laplacian matrix is defined by
\[
L(G)=D(G)-A(G).
\]
When $G$ is connected, we write its Laplacian eigenvalues as
\[
0=\mu_1<\mu_2\le\cdots\le\mu_n.
\]

Let $L(G)^+$ be the Moore--Penrose inverse of $L(G)$,
and let $e_x$ be the standard basis vector corresponding to a vertex $x$.
For two distinct vertices $x,y\in V(G)$, the effective resistance between
$x$ and $y$ is given by
\[
R_G(x,y)
=
(e_x-e_y)^TL(G)^+(e_x-e_y)
\]
\cite{KleinRandic1993,Bapat2010}.

%%%%%%%%%%%%%%%%%%%%%%%%%%%%%%%%%%%%%%%%
\begin{definition}
\label{def:kirchhoff-index}
The Kirchhoff index of a connected graph $G$ is defined by
\[
Kf(G)
=\sum_{\{x,y\}\in\binom{V(G)}{2}}R_G(x,y).
\]
\end{definition}

We use the following standard spectral formulas
\cite{Kirchhoff1847,GutmanMohar1996,Bapat2010}.

%%%%%%%%%%%%%%%%%%%%%%%%%%%%%%%%%%%%%%%%
\begin{theorem}[Kirchhoff index \cite{GutmanMohar1996} and Matrix--Tree Theorem \cite{Kirchhoff1847}]
\label{thm:kirchhoff-spectral}
\label{thm:matrix-tree}
Let $G$ be a connected graph with $n$ vertices.
Then
\[
Kf(G)
=n\operatorname{tr}L(G)^+
=n\sum_{i=2}^{n}\frac{1}{\mu_i}.
\]
Also, if $\tau(G)$ denotes the number of spanning trees of $G$, then
\[
\tau(G)
=\frac{1}{n}\prod_{i=2}^{n}\mu_i.
\]
\end{theorem}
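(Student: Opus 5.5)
We prove the two formulas of Theorem~\ref{thm:kirchhoff-spectral} separately, both starting from the spectral decomposition of $L=L(G)$. Since $L$ is real symmetric and positive semidefinite, we write $L=\sum_{i=1}^{n}\mu_i u_iu_i^T$ with an orthonormal eigenbasis $u_1,\dots,u_n$. Because $G$ is connected, the kernel of $L$ is spanned by $u_1=\frac{1}{\sqrt n}\mathbf{1}$, and $\mu_i>0$ for $i\ge 2$. Then $L^+=\sum_{i=2}^{n}\mu_i^{-1}u_iu_i^T$, and in particular $\operatorname{tr}L^+=\sum_{i\ge2}1/\mu_i$ and $L^+\mathbf{1}=0$.

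\emph{Kirchhoff index.} We expand $R_G(x,y)=L^+_{xx}+L^+_{yy}-2L^+_{xy}$ and sum over ordered pairs $(x,y)$, halving at the end. The diagonal terms contribute $2n\operatorname{tr}L^+$. The cross terms give $-2\,\mathbf{1}^TL^+\mathbf{1}=0$, since $\mathbf{1}\in\ker L^+$. Therefore
\[
Kf(G)=\tfrac12\cdot 2n\operatorname{tr}L^+=n\sum_{i=2}^n\frac{1}{\mu_i}.
\]
The pairs with $x=y$ contribute zero, so summing over all ordered pairs is harmless.

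\emph{Matrix--Tree part.} We take Kirchhoff's theorem in its cofactor form: every principal $(n-1)\times(n-1)$ minor of $L$ equals $\tau(G)$. This is the classical statement \cite{Kirchhoff1847}, which we cite. If a proof is desired, it follows from $L=NN^T$, where $N$ is a signed incidence matrix: apply Cauchy--Binet to the reduced incidence matrix, and note that the $(n-1)\times(n-1)$ minors equal $\pm1$ exactly on the edge sets of spanning trees and $0$ otherwise. Next, the coefficient of $-t$ (up to sign) in the characteristic polynomial $\det(tI-L)=t\prod_{i\ge2}(t-\mu_i)$ gives
\[
\prod_{i=2}^n\mu_i=\sum_{v}\det L^{(v)},
\]
the sum of the $n$ principal $(n-1)$-minors, each of which equals $\tau(G)$. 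Hence $\prod_{i\ge2}\mu_i=n\,\tau(G)$.

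The main obstacle is the cofactor form of the Matrix--Tree theorem itself, specifically the Cauchy--Binet step with unimodularity of the reduced incidence matrix. Since this is classical and cited, I would invoke it rather than reprove it. The remaining steps are routine linear algebra.
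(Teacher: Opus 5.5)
Your proof is correct, and it does more than the paper, which gives no proof of this statement at all. The paper quotes it as a standard fact, citing Gutman--Mohar for the Kirchhoff-index formula and Kirchhoff (and Bapat) for the Matrix--Tree Theorem, and then uses it as a black box.

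\emph{Kirchhoff index.} You derive $Kf(G)=n\operatorname{tr}L(G)^+$ directly from the paper's own definition $R_G(x,y)=(e_x-e_y)^TL(G)^+(e_x-e_y)$. You sum $L^+_{xx}+L^+_{yy}-2L^+_{xy}$ over ordered pairs and use $L(G)^+\boldsymbol{1}=\boldsymbol{0}$. This part is fully self-contained.

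\emph{Spanning trees.} You reduce the spectral form to the cofactor form by comparing coefficients of $t$ in $\det(tI_n-L)=t\prod_{i\ge2}(t-\mu_i)$. On one side the coefficient is $(-1)^{n-1}\prod_{i\ge2}\mu_i$. On the other it is $(-1)^{n-1}$ times the sum of the $n$ principal $(n-1)\times(n-1)$ minors. This is correct, though your phrase ``coefficient of $-t$ (up to sign)'' should be replaced by this explicit sign $(-1)^{n-1}$ on both sides. The only ingredient you still cite is the cofactor form of the Matrix--Tree Theorem, and your Cauchy--Binet sketch for it is the standard one.

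\emph{Comparison.} Your route buys transparency: it shows exactly which facts are used, namely $\ker L=\operatorname{span}\{\boldsymbol{1}\}$ for connected $G$ and the factorization $L=NN^T$. These are the same structures the paper later exploits through $\boldsymbol{1}^{\perp}$ and the incidence matrix $B$. The paper's citation buys brevity, which is reasonable for textbook results.
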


%%%%%%%%%%%%%%%%%%%%%%%%%%%%%%%%%%%%%%%%
\subsection{Matrix reduction formulas}
%%%%%%%%%%%%%%%%%%%%%%%%%%%%%%%%%%%%%%%%

We use the following standard matrix formulas
\cite{Bhatia1997}.

%%%%%%%%%%%%%%%%%%%%%%%%%%%%%%%%%%%%%%%%
\begin{lemma}
\label{lem:nonzero-spectrum}
Let $B$ be an $n\times p$ real matrix, and put
\[
Q=B^TB.
\]
Then the nonzero eigenvalues of $BB^T$ and $Q$ are the same,
including multiplicities.
\end{lemma}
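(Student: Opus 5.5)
The plan is to use the singular value decomposition of $B$, or equivalently to let $BB^T$ and $Q=B^TB$ intertwine each other's eigenvectors. Both matrices are real symmetric and positive semidefinite, so each is orthogonally diagonalizable with nonnegative eigenvalues. Multiplicity then becomes the dimension of an eigenspace, and it suffices to show that, for each $\lambda\neq 0$, the $\lambda$-eigenspaces of $Q$ and $BB^T$ have the same dimension.

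Fix $\lambda\neq 0$ and let $E_\lambda(Q)=\ker(Q-\lambda I_p)$ and $E_\lambda(BB^T)=\ker(BB^T-\lambda I_n)$.

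\begin{itemize}
\item If $Qv=\lambda v$, then $BB^T(Bv)=B(Qv)=\lambda Bv$. Hence $B$ maps $E_\lambda(Q)$ into $E_\lambda(BB^T)$.
\item The map is injective. If $Bv=0$, then $\lambda v=Qv=B^TBv=0$, and $\lambda\neq 0$ forces $v=0$.
\item Therefore $\dim E_\lambda(Q)\le\dim E_\lambda(BB^T)$.
\item The symmetric argument uses $B^T$ in place of $B$. If $BB^Tw=\lambda w$, then $Q(B^Tw)=\lambda B^Tw$, and $B^Tw=0$ would force $w=0$. This gives the reverse inequality.
\end{itemize}

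Since geometric and algebraic multiplicities agree for symmetric matrices, the nonzero eigenvalues of $BB^T$ and $Q$ coincide, including multiplicities.

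The step needing the most care is the passage from equal eigenspace dimensions to equal multiplicities. It is resolved by the diagonalizability of symmetric matrices. If one prefers to avoid this, an alternative is the determinant identity $\lambda^{p}\det(\lambda I_n-BB^T)=\lambda^{n}\det(\lambda I_p-B^TB)$. This identity follows from block elimination applied to the matrix $\begin{pmatrix}\lambda I_n & B\\ B^T & I_p\end{pmatrix}$ in two orders. It shows directly that the characteristic polynomials differ only by a power of $\lambda$, which gives equal algebraic multiplicities of all nonzero roots.
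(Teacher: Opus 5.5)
Your proof is correct and follows essentially the same route as the paper's: $B$ and $B^T$ map between the $\lambda$-eigenspaces of $Q$ and $BB^T$ for $\lambda\ne0$, so the eigenspaces have equal dimension, and diagonalizability of symmetric matrices turns this into equal multiplicities. The only difference is that you get equality from injectivity in both directions, whereas the paper shows that $B^T$ and $B/\lambda$ are mutually inverse maps.
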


\begin{proof}
Let $\lambda\ne0$.
First, suppose that
\[
BB^Tx=\lambda x
\]
for some $x \in \mathbb{R}^n\setminus\{\boldsymbol{0}\}$.
Then
\[
B^Tx\ne\boldsymbol{0}.
\]
Indeed, if
\[
B^Tx=\boldsymbol{0},
\]
then
\[
BB^Tx
=
B(B^Tx)
=
\boldsymbol{0},
\]
and hence
\[
\lambda x=\boldsymbol{0}.
\]
Since $\lambda\ne0$, this implies $x=\boldsymbol{0}$, which is a contradiction.

Moreover,
\[
\begin{aligned}
Q(B^Tx)
&=
B^TB(B^Tx)\\
&=
B^T(BB^Tx)\\
&=
B^T(\lambda x)\\
&=
\lambda B^Tx.
\end{aligned}
\]
Thus $B^Tx$ is a $\lambda$-eigenvector of $Q$.
Hence $B^T$ gives a linear map from the $\lambda$-eigenspace of $BB^T$
to the $\lambda$-eigenspace of $Q$.

Conversely, suppose that
\[
Qy=B^TBy=\lambda y
\]
for some $y \in \mathbb{R}^p\setminus\{\boldsymbol{0}\}$.
Then
\[
By\ne\boldsymbol{0}.
\]
Indeed, if
\[
By=\boldsymbol{0},
\]
then
\[
Qy
=
B^TBy
=
\boldsymbol{0},
\]
and hence
\[
\lambda y=\boldsymbol{0}.
\]
Since $\lambda\ne0$, this implies $y=\boldsymbol{0}$, which is a contradiction.

Moreover,
\[
\begin{aligned}
BB^T(By)
&=
B(B^TBy)\\
&=
B(\lambda y)\\
&=
\lambda By.
\end{aligned}
\]
Thus $By$ is a $\lambda$-eigenvector of $BB^T$.

Now, for a $\lambda$-eigenvector $x$ of $BB^T$, we have
\[
B(B^Tx)
=
BB^Tx
=
\lambda x.
\]
Therefore,
\[
\frac{1}{\lambda}B(B^Tx)=x.
\]
Also, for a $\lambda$-eigenvector $y$ of $Q$, we have
\[
B^T(By)
=
B^TBy
=
\lambda y.
\]
Therefore,
\[
B^T\left(
\frac{1}{\lambda}By
\right)
=
y.
\]
Thus $B^T$ and $B/\lambda$ are inverse maps between the two
$\lambda$-eigenspaces.
Hence the dimensions of the $\lambda$-eigenspaces of $BB^T$ and $Q$
are the same.

Since both matrices are real symmetric matrices, the algebraic multiplicity
of each eigenvalue is equal to the dimension of its eigenspace.
Therefore, the nonzero eigenvalues of $BB^T$ and $Q$ are the same,
including multiplicities.
\end{proof}

%%%%%%%%%%%%%%%%%%%%%%%%%%%%%%%%%%%%%%%%
\begin{lemma}
\label{lem:resolvent-reduction}
Let $B$ be an $n\times p$ real matrix, and put
\[
Q=B^TB.
\]
If $nI_p-Q$ is nonsingular, then
\[
(nI_n-BB^T)^{-1}
=
\frac{1}{n}I_n
+
\frac{1}{n}
B(nI_p-Q)^{-1}B^T
\]
holds.
\end{lemma}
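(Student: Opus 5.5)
The plan is to verify the formula directly by multiplication, which is the standard proof of a Woodbury-type identity. Let $M=\frac{1}{n}I_n+\frac{1}{n}B(nI_p-Q)^{-1}B^T$. Since $nI_n-BB^T$ is a square matrix, it suffices to show the one-sided identity $(nI_n-BB^T)M=I_n$. A one-sided inverse of a square matrix is automatically two-sided, and this also shows at once that $nI_n-BB^T$ is nonsingular.

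The computation goes as follows. Expanding the product gives
\[
(nI_n-BB^T)M
=I_n-\frac{1}{n}BB^T+B(nI_p-Q)^{-1}B^T-\frac{1}{n}BB^TB(nI_p-Q)^{-1}B^T .
\]
In the last term, write $BB^TB=BQ$. The last two terms then combine to
\[
\frac{1}{n}B\bigl(nI_p-Q\bigr)(nI_p-Q)^{-1}B^T=\frac{1}{n}BB^T ,
\]
which cancels the second term and leaves exactly $I_n$.

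There is no real obstacle. The only point needing care is regrouping $BB^TB(nI_p-Q)^{-1}$ as $B\,Q\,(nI_p-Q)^{-1}$, so that the factor $nI_p-Q$ can be pulled out on the left of its inverse. An alternative argument uses Lemma \ref{lem:nonzero-spectrum}: every nonzero eigenvalue of $BB^T$ is an eigenvalue of $Q$, hence different from $n$, so $nI_n-BB^T$ is invertible. This spectral argument is not needed, however, because the direct multiplication already establishes invertibility.
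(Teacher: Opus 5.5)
Your proposal is correct and follows essentially the same route as the paper: verify $(nI_n-BB^T)X=I_n$ by direct expansion, use $BB^TB=BQ$ and $(nI_p-Q)(nI_p-Q)^{-1}=I_p$ to cancel the $BB^T$ terms, then conclude by the square-matrix one-sided-inverse argument. The only difference is bookkeeping: you factor the two terms as $\frac{1}{n}B(nI_p-Q)(nI_p-Q)^{-1}B^T$, whereas the paper collects everything into $B\bigl(R-\tfrac{1}{n}I_p-\tfrac{1}{n}QR\bigr)B^T$ and shows the bracket vanishes.
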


\begin{proof}
Put
\[
R=(nI_p-Q)^{-1},
\]
and define
\[
X
=
\frac{1}{n}I_n
+
\frac{1}{n}BRB^T.
\]
We show that $X$ is the inverse of $nI_n-BB^T$.

Using $Q=B^TB$, we have
\[
\begin{aligned}
(nI_n-BB^T)X
&=
(nI_n-BB^T)
\left(
\frac{1}{n}I_n
+
\frac{1}{n}BRB^T
\right)\\
&=
I_n
+
BRB^T
-
\frac{1}{n}BB^T
-
\frac{1}{n}BB^TBRB^T\\
&=
I_n
+
BRB^T
-
\frac{1}{n}BB^T
-
\frac{1}{n}BQRB^T\\
&=
I_n
+
B\left(
R-\frac{1}{n}I_p-\frac{1}{n}QR
\right)B^T.
\end{aligned}
\]

On the other hand, since
\[
R=(nI_p-Q)^{-1},
\]
we have
\[
(nI_p-Q)R=I_p.
\]
Expanding this equality gives
\[
nR-QR=I_p.
\]
Dividing both sides by $n$, we obtain
\[
R-\frac{1}{n}QR
=
\frac{1}{n}I_p.
\]
Therefore,
\[
R-\frac{1}{n}I_p-\frac{1}{n}QR
=
O.
\]
Substituting this into the above expression, we obtain
\[
(nI_n-BB^T)X
=
I_n.
\]

Since both $nI_n-BB^T$ and $X$ are $n\times n$ matrices,
the right inverse $X$ is also a left inverse.
Hence
\[
X=(nI_n-BB^T)^{-1}.
\]
Thus
\[
(nI_n-BB^T)^{-1}
=
\frac{1}{n}I_n
+
\frac{1}{n}
B(nI_p-Q)^{-1}B^T.
\]
\end{proof}

%%%%%%%%%%%%%%%%%%%%%%%%%%%%%%%%%%%%%%%%
\subsection{The edge-defect matrix}
%%%%%%%%%%%%%%%%%%%%%%%%%%%%%%%%%%%%%%%%

Let $F\subseteq E(K_n)$ be
\[
F=\{e_1,\ldots,e_p\},
\]
and put
\[
G=K_n-F.
\]
Also, put
\[
V(F)=\bigcup_{e\in F}e,
\qquad
H_F=(V(F),F),
\]
and call $H_F$ the deletion graph of $F$.

Let
\[
e_\alpha=\{u_\alpha,v_\alpha\}
\]
be a deleted edge.
We give each deleted edge an arbitrary orientation and define
\[
b_\alpha=e_{u_\alpha}-e_{v_\alpha}.
\]
Furthermore, put
\[
B=
\begin{bmatrix}
b_1&\cdots&b_p
\end{bmatrix}.
\]

%%%%%%%%%%%%%%%%%%%%%%%%%%%%%%%%%%%%%%%%
\begin{definition}
\label{def:edge-defect-matrix}
The edge-defect matrix of the deleted edge set $F$ is defined by
\[
Q=Q_F=B^TB.
\]
\end{definition}

The matrix $Q$ is the edge Laplacian obtained from the incidence matrix
of the deletion graph $H_F$.

%%%%%%%%%%%%%%%%%%%%%%%%%%%%%%%%%%%%%%%%
\begin{lemma}
\label{lem:basic-Q}
The edge-defect matrix $Q$ satisfies the following properties.
\begin{enumerate}
\item[(1)]
$Q$ is a real symmetric positive semidefinite matrix, and
\[
Q_{\alpha\alpha}=2,
\qquad
\operatorname{tr}Q=2p.
\]

\item[(2)]
For $\alpha\ne\beta$,
\[
Q_{\alpha\beta}
=
\begin{cases}
0,
& e_\alpha\cap e_\beta=\varnothing,\\
\pm1,
& |e_\alpha\cap e_\beta|=1.
\end{cases}
\]

\item[(3)]
Changing the orientations of the deleted edges does not change the
eigenvalues of $Q$.
\end{enumerate}
\end{lemma}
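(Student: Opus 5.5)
The plan is to compute everything directly from the entries $Q_{\alpha\beta}=b_\alpha^Tb_\beta$. For (1), symmetry is immediate from $Q^T=(B^TB)^T=B^TB$. Positive semidefiniteness follows from
\[
y^TQy=(By)^T(By)=\|By\|^2\ge0
\]
for every $y\in\mathbb{R}^p$. For the diagonal entries,
\[
Q_{\alpha\alpha}=\|e_{u_\alpha}-e_{v_\alpha}\|^2=1+1=2,
\]
since $u_\alpha\ne v_\alpha$ (the graph is simple, so there are no loops). Summing the $p$ diagonal entries gives $\operatorname{tr}Q=2p$.

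For (2), I will expand the product for $\alpha\ne\beta$:
\[
Q_{\alpha\beta}=(e_{u_\alpha}-e_{v_\alpha})^T(e_{u_\beta}-e_{v_\beta})
=\delta_{u_\alpha u_\beta}-\delta_{u_\alpha v_\beta}-\delta_{v_\alpha u_\beta}+\delta_{v_\alpha v_\beta}.
\]
Each Kronecker delta is $1$ exactly when the corresponding endpoints coincide. Because $F$ is a set of distinct edges of a simple graph, $e_\alpha\ne e_\beta$, so $|e_\alpha\cap e_\beta|\in\{0,1\}$. If the edges are disjoint, all four terms vanish and $Q_{\alpha\beta}=0$. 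If they share exactly one vertex, exactly one delta is nonzero. The reason is that two nonzero deltas would force either both endpoints of $e_\alpha$ to coincide with endpoints of $e_\beta$ (so $e_\alpha=e_\beta$), or $u_\alpha=v_\alpha$ or $u_\beta=v_\beta$, all of which are impossible. Hence $Q_{\alpha\beta}=\pm1$, with the sign $+$ when the shared vertex has the same role (head/head or tail/tail) in both edges and $-$ otherwise. The only delicate point in the whole lemma is this check that at most one delta can be nonzero.

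For (3), reversing the orientation of edge $e_\alpha$ replaces $b_\alpha$ by $-b_\alpha$. For an arbitrary choice of orientations this means $B$ becomes $BS$ with $S=\operatorname{diag}(\pm1)$. Then
\[
Q'=(BS)^T(BS)=SQS=SQS^{-1},
\]
because $S$ is orthogonal with $S^{-1}=S$. So $Q'$ is similar to $Q$, and the two matrices have the same eigenvalues with the same multiplicities. This is routine once the diagonal sign matrix is identified, so I expect no real obstacle.
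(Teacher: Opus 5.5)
Your proposal is correct and follows the paper's proof essentially step for step: symmetry and positive semidefiniteness from $Q=B^TB$, diagonal entries $2$ from the column structure, off-diagonal entries from the overlap of the supports of $b_\alpha$ and $b_\beta$, and similarity under the diagonal sign matrix $S$ for (3). Your Kronecker-delta check that at most one term is nonzero, and your remark that distinct edges force $|e_\alpha\cap e_\beta|\in\{0,1\}$, just spell out details the paper treats as evident.
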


\begin{proof}
First, since $Q=B^TB$, we have
\[
Q^T
=
(B^TB)^T
=
B^TB
=
Q.
\]
Thus $Q$ is a real symmetric matrix.

Also, for any $x\in\mathbb{R}^p$, we have
\[
x^TQx
=
x^TB^TBx
=
\|Bx\|^2
\ge0.
\]
Hence $Q$ is positive semidefinite.

Each column vector $b_\alpha$ is the incidence vector obtained by orienting
the deleted edge $e_\alpha$.
It has exactly one entry equal to $1$, one entry equal to $-1$,
and all other entries equal to $0$.
Therefore,
\[
Q_{\alpha\alpha}
=
b_\alpha^Tb_\alpha
=
1^2+(-1)^2
=
2.
\]
Hence
\[
\operatorname{tr}Q
=
\sum_{\alpha=1}^{p}Q_{\alpha\alpha}
=
2p.
\]
This proves (1).

Next, let $\alpha\ne\beta$.
Then
\[
Q_{\alpha\beta}
=
b_\alpha^Tb_\beta.
\]
If the deleted edges $e_\alpha$ and $e_\beta$ do not share an end vertex,
then the nonzero entries of $b_\alpha$ and $b_\beta$ occur in different
positions.
Therefore,
\[
b_\alpha^Tb_\beta=0.
\]

On the other hand, if $e_\alpha$ and $e_\beta$ share exactly one end vertex,
then the nonzero entries of $b_\alpha$ and $b_\beta$ occur in the same
position only at the shared vertex.
At this position, each entry is either $1$ or $-1$.
Hence
\[
b_\alpha^Tb_\beta=\pm1.
\]
The sign depends on the orientations of the two deleted edges.
Therefore,
\[
Q_{\alpha\beta}
=
\begin{cases}
0,
& e_\alpha\cap e_\beta=\varnothing,\\
\pm1,
& |e_\alpha\cap e_\beta|=1.
\end{cases}
\]
This proves (2).

Finally, we prove (3).
If the orientation of a deleted edge $e_\alpha$ is reversed,
then the corresponding column vector $b_\alpha$ is replaced by $-b_\alpha$.
Therefore, if we change the orientations of several deleted edges,
then there exists a diagonal sign matrix
\[
S
=
\operatorname{diag}
(\varepsilon_1,\ldots,\varepsilon_p),
\qquad
\varepsilon_\alpha\in\{1,-1\},
\]
such that
\[
B'=BS.
\]
The new edge-defect matrix is then
\[
\begin{aligned}
Q'
&=
(B')^TB'\\
&=
S^TB^TBS.
\end{aligned}
\]
Since $S$ is a diagonal matrix and satisfies
\[
S^T=S,
\qquad
S^{-1}=S,
\]
we have
\[
Q'
=
SQS
=
S^{-1}QS.
\]
Thus $Q'$ and $Q$ are similar matrices.
Therefore, their eigenvalues are the same, including multiplicities.
\end{proof}

%%%%%%%%%%%%%%%%%%%%%%%%%%%%%%%%%%%%%%%%
\begin{lemma}
\label{lem:laplacian-defect}
Let $J_n$ be the $n\times n$ all-one matrix.
Then
\[
L(K_n-F)
=
nI_n-J_n-BB^T
\]
holds.
\end{lemma}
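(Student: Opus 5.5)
The identity follows by writing both sides as sums of rank-one matrices indexed by edges. The plan is to start from the decomposition $L(H)=\sum_{\{u,v\}\in E(H)}(e_u-e_v)(e_u-e_v)^T$, which holds for any simple graph $H$ on the vertex set of $K_n$. To verify it entrywise, note that the term for the edge $\{u,v\}$ contributes $+1$ to the diagonal entries $(u,u)$ and $(v,v)$ and $-1$ to the off-diagonal entries $(u,v)$ and $(v,u)$. Summing over all edges, each diagonal entry $(x,x)$ therefore equals the degree of $x$, and each off-diagonal entry $(x,y)$ equals $-1$ exactly when $\{x,y\}$ is an edge. This is precisely $D(H)-A(H)$.

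Next, I apply the decomposition to $K_n$ itself. Every vertex has degree $n-1$ and every off-diagonal entry of the adjacency matrix is $1$, so
\[
L(K_n)=(n-1)I_n-(J_n-I_n)=nI_n-J_n .
\]

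Then I split the edge set as $E(K_n)=E(K_n-F)\sqcup F$, a disjoint union. Since the Laplacian is additive over the edges, this gives
\[
L(K_n)=L(K_n-F)+\sum_{\alpha=1}^{p} b_\alpha b_\alpha^T .
\]
Here the orientation chosen for each deleted edge does not matter, because $b_\alpha b_\alpha^T=(-b_\alpha)(-b_\alpha)^T$.

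Finally, I use the column expansion $BB^T=\sum_{\alpha=1}^{p} b_\alpha b_\alpha^T$, which is immediate from $B=[\,b_1\ \cdots\ b_p\,]$. Substituting this into the previous display and rearranging yields
\[
L(K_n-F)=nI_n-J_n-BB^T .
\]

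No step is a real obstacle. The only point needing care is that the deleted edges are distinct and simple, so each $b_\alpha b_\alpha^T$ removes exactly one edge of $K_n$ with no double counting. This is guaranteed because $F$ is a set of edges of $K_n$.
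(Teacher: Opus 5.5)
Your proof is correct and follows essentially the same route as the paper: both write $L(K_n)=nI_n-J_n$, identify each $b_\alpha b_\alpha^T$ as the Laplacian of the single edge $e_\alpha$ so that deleting $F$ subtracts $\sum_{\alpha=1}^{p} b_\alpha b_\alpha^T$, and then use $BB^T=\sum_{\alpha=1}^{p} b_\alpha b_\alpha^T$. Your entrywise check of the edge decomposition of the Laplacian and your explicit computation of $L(K_n)$ only fill in details that the paper states briefly.
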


\begin{proof}
The Laplacian matrix of the complete graph $K_n$ is
\[
\begin{aligned}
L(K_n)
&=nI_n-J_n.
\end{aligned}
\]

For a deleted edge
\[
e_\alpha=\{u_\alpha,v_\alpha\},
\]
put
\[
b_\alpha
=
e_{u_\alpha}-e_{v_\alpha}.
\]
Then
\[
b_\alpha b_\alpha^T
\]
is the Laplacian matrix of the edge $e_\alpha$.
Indeed, this matrix has $1$ in the $(u_\alpha,u_\alpha)$ and
$(v_\alpha,v_\alpha)$ entries, $-1$ in the $(u_\alpha,v_\alpha)$ and
$(v_\alpha,u_\alpha)$ entries, and $0$ in all other entries.

Therefore, deleting
\[
F=\{e_1,\ldots,e_p\}
\]
from $K_n$ gives
\[
L(K_n-F)
=
L(K_n)
-
\sum_{\alpha=1}^{p}
b_\alpha b_\alpha^T.
\]
Since
\[
BB^T
=
\sum_{\alpha=1}^{p}
b_\alpha b_\alpha^T,
\]
we obtain
\[
L(K_n-F)
=
nI_n-J_n-BB^T.
\]
\end{proof}

%%%%%%%%%%%%%%%%%%%%%%%%%%%%%%%%%%%%%%%%
\begin{lemma}
\label{lem:positive-defect-resolvent}
If $K_n-F$ is connected, then
\[
nI_p-Q
\]
is positive definite.
In particular, every eigenvalue $\theta$ of $Q$ satisfies
\[
0\le\theta<n.
\]
\end{lemma}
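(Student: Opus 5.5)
We will prove that if $K_n-F$ is connected, then every eigenvalue $\theta$ of $Q$ satisfies $0\le\theta<n$; positive definiteness of $nI_p-Q$ then follows at once. By Lemma~\ref{lem:basic-Q}(1), $Q$ is real symmetric positive semidefinite, so $\theta\ge 0$. It therefore suffices to show that $\theta<n$ for every eigenvalue $\theta$ of $Q$. Then every eigenvalue $n-\theta$ of the symmetric matrix $nI_p-Q$ is positive, which is exactly positive definiteness.

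The plan is to pass from $Q$ to $BB^T$ via Lemma~\ref{lem:nonzero-spectrum}. Since the zero eigenvalues of $Q$ trivially satisfy $\theta<n$, it is enough to show that every eigenvalue $\lambda$ of $BB^T$ satisfies $\lambda<n$.

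\textbf{Step 1: reduce to vectors orthogonal to the all-one vector.} Let $\mathbf 1$ denote the all-one vector. Each column $b_\alpha$ satisfies $\mathbf 1^Tb_\alpha=0$, so $B^T\mathbf 1=0$ and $BB^T\mathbf 1=0$. Hence $\mathbf 1$ is an eigenvector of $BB^T$ with eigenvalue $0$. Because $BB^T$ is symmetric, the orthogonal complement $\mathbf 1^\perp$ is invariant under $BB^T$, and every nonzero eigenvalue has an eigenvector $x\perp\mathbf 1$.

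\textbf{Step 2: use the Laplacian of $K_n-F$.} For such an $x$, we have $J_nx=0$, so Lemma~\ref{lem:laplacian-defect} gives
\[
L(K_n-F)x=(nI_n-J_n-BB^T)x=(n-\lambda)x .
\]
Thus $n-\lambda$ is a Laplacian eigenvalue of $K_n-F$ with an eigenvector orthogonal to $\mathbf 1$.

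\textbf{Step 3: apply connectivity.} Since $K_n-F$ is connected, the kernel of its Laplacian is spanned by $\mathbf 1$. The eigenvector $x\neq 0$ is orthogonal to $\mathbf 1$, so it does not lie in this kernel. Therefore $n-\lambda$ is a positive Laplacian eigenvalue, i.e.\ $n-\lambda\ge\mu_2>0$, and so $\lambda<n$. By Lemma~\ref{lem:nonzero-spectrum}, every nonzero eigenvalue of $Q$ is such a $\lambda$, which completes the argument.

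The only delicate point is Step 3: one must use that the Laplacian of a connected graph has a one-dimensional kernel spanned by $\mathbf 1$, together with $x\perp\mathbf 1$. This is a standard fact, and I would cite it rather than reprove it.
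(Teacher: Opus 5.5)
Your proposal is correct and follows essentially the same route as the paper: you transfer the nonzero eigenvalues of $Q$ to $BB^T$ via Lemma~\ref{lem:nonzero-spectrum}, place the eigenvector in $\boldsymbol{1}^{\perp}$, and then use $L(K_n-F)z=(n-\theta)z$ together with connectivity to conclude $\theta<n$. The only cosmetic differences are that the paper gets $z\in\boldsymbol{1}^{\perp}$ from $z=\theta^{-1}B(B^Tz)\in\operatorname{Im}B$ rather than from symmetry and $BB^T\boldsymbol{1}=\boldsymbol{0}$, and that it phrases the last step as $z^TL(K_n-F)z=(n-\theta)\|z\|^2>0$.
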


\begin{proof}
By Lemma \ref{lem:basic-Q} (1), $Q$ is a real symmetric positive
semidefinite matrix.
Thus all eigenvalues of $Q$ are nonnegative.

Let $\theta$ be an eigenvalue of $Q$.
If $\theta=0$, then
\[
n-\theta=n>0.
\]

We now assume that $\theta>0$.
By Lemma \ref{lem:nonzero-spectrum}, $\theta$ is also an eigenvalue
of $BB^T$.
Hence there exists
\[
z\in\mathbb{R}^n\setminus\{\boldsymbol{0}\}
\]
such that
\[
BB^Tz=\theta z.
\]

Since $\theta>0$, we have
\[
z
=
\frac{1}{\theta}BB^Tz
=
\frac{1}{\theta}B(B^Tz).
\]
Thus
\[
z\in\operatorname{Im}B.
\]

On the other hand, each column $b_\alpha$ of $B$ has one entry equal to
$1$ and one entry equal to $-1$.
Therefore,
\[
\boldsymbol{1}^Tb_\alpha=0.
\]
Hence
\[
\operatorname{Im}B
\subseteq
\boldsymbol{1}^{\perp}.
\]
Thus
\[
z\in\boldsymbol{1}^{\perp},
\qquad
J_nz=\boldsymbol{0}.
\]

By Lemma \ref{lem:laplacian-defect},
\[
L(K_n-F)
=
nI_n-J_n-BB^T.
\]
Therefore,
\[
\begin{aligned}
L(K_n-F)z
&=
(nI_n-J_n-BB^T)z\\
&=
nz-\theta z\\
&=
(n-\theta)z.
\end{aligned}
\]
Hence
\[
z^TL(K_n-F)z
=
(n-\theta)\|z\|^2.
\]

Since $K_n-F$ is connected, its Laplacian matrix is positive definite
on $\boldsymbol{1}^{\perp}$.
Now
\[
z\in
\boldsymbol{1}^{\perp}
\setminus\{\boldsymbol{0}\}.
\]
Thus
\[
z^TL(K_n-F)z>0.
\]
Therefore,
\[
(n-\theta)\|z\|^2>0.
\]
Since $z\ne\boldsymbol{0}$, we obtain
\[
n-\theta>0,
\]
that is,
\[
\theta<n.
\]

Thus every eigenvalue $\theta$ of $Q$ satisfies
\[
0\le\theta<n.
\]
Therefore, every eigenvalue
\[
n-\theta
\]
of $nI_p-Q$ is positive.
Hence
\[
nI_p-Q
\]
is positive definite.
\end{proof}

%%%%%%%%%%%%%%%%%%%%%%%%%%%%%%%%%%%%%%%%
\begin{proposition}
\label{prop:matching-characterization}
The following conditions are equivalent:
\[
F\text{ is a matching},
\qquad
H_F\cong pK_2,
\qquad
Q=2I_p.
\]
\end{proposition}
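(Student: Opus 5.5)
We prove the three conditions equivalent by going around the cycle
\[
F\text{ is a matching}\;\Longrightarrow\; Q=2I_p\;\Longrightarrow\; H_F\cong pK_2\;\Longrightarrow\; F\text{ is a matching}.
\]
Everything rests on the entrywise description of $Q$ in Lemma \ref{lem:basic-Q}. The diagonal is always $Q_{\alpha\alpha}=2$. For $\alpha\ne\beta$, the entry $Q_{\alpha\beta}$ is $0$ exactly when $e_\alpha\cap e_\beta=\varnothing$ and is $\pm1$ when the two edges share one vertex. Since the graphs are simple and the $e_\alpha$ are distinct edges, two distinct deleted edges can share at most one vertex, so these two cases are exhaustive. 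Consequently, $Q_{\alpha\beta}=0$ holds if and only if $e_\alpha$ and $e_\beta$ are disjoint.

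\emph{Matching $\Rightarrow$ $Q=2I_p$.} If $F$ is a matching, its edges are pairwise disjoint. By Lemma \ref{lem:basic-Q} (2), every off-diagonal entry of $Q$ then vanishes, and by (1) every diagonal entry equals $2$. Hence $Q=2I_p$.

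\emph{$Q=2I_p$ $\Rightarrow$ $H_F\cong pK_2$.} If $Q=2I_p$, all off-diagonal entries are $0$, so by the observation above the $p$ edges are pairwise disjoint. Then $V(F)$ consists of exactly $2p$ distinct vertices, and the graph $H_F=(V(F),F)$ is a disjoint union of $p$ edges with no isolated vertices. This is precisely $pK_2$.

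\emph{$H_F\cong pK_2$ $\Rightarrow$ matching.} In $pK_2$ every vertex has degree $1$. This property is preserved under isomorphism, so every vertex of $H_F$ lies in exactly one edge of $F$. Thus no two deleted edges share an endpoint, and $F$ is a matching.

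The only point needing care is the second step. There one must use that the columns $b_\alpha$ come from distinct edges of a simple graph, so that $|e_\alpha\cap e_\beta|\in\{0,1\}$. Otherwise $|b_\alpha^Tb_\beta|=2$ would also be possible, and a zero entry would still force disjointness, so the conclusion is unaffected anyway. Given this, the proof is a routine translation between the entries of $Q$ and the incidence structure of $H_F$.
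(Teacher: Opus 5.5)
Your proof is correct and follows essentially the same route as the paper: everything is read off from the entrywise description of $Q$ in Lemma \ref{lem:basic-Q}, and the equivalence between a matching and $H_F\cong pK_2$ is a direct translation (you justify it via vertex degrees, whereas the paper treats it as immediate). Your closing remark is slightly muddled---it first says simplicity must be used and then that it is not needed---but since a zero off-diagonal entry forces disjointness in either case, this does not affect the argument.
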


\begin{proof}
First, suppose that $F$ is a matching.
Then any two distinct deleted edges do not share an end vertex.
Therefore, by Lemma \ref{lem:basic-Q} (2), we have
\[
Q_{\alpha\beta}=0
\qquad
(\alpha\ne\beta).
\]
Also, by Lemma \ref{lem:basic-Q} (1), we have
\[
Q_{\alpha\alpha}=2.
\]
Hence
\[
Q=2I_p.
\]

Conversely, suppose that
\[
Q=2I_p.
\]
Then all off-diagonal entries of $Q$ are $0$.
By Lemma \ref{lem:basic-Q} (2), if two distinct deleted edges share
an end vertex, then the corresponding off-diagonal entry is $\pm1$.
Therefore, any two distinct deleted edges do not share an end vertex.
Thus $F$ is a matching.

Moreover, the condition that $F$ consists of $p$ pairwise disjoint edges
is equivalent to saying that the deletion graph $H_F$ is the disjoint union
of $p$ independent edges, that is,
\[
H_F\cong pK_2.
\]
Therefore,
\[
F\text{ is a matching}
\quad\Longleftrightarrow\quad
H_F\cong pK_2
\quad\Longleftrightarrow\quad
Q=2I_p.
\]
\end{proof}

The next lemma shows that the spectrum of the edge-defect matrix is
described by the Laplacian spectrum of the deletion graph.

%%%%%%%%%%%%%%%%%%%%%%%%%%%%%%%%%%%%%%%%
\begin{lemma}
\label{lem:deletion-spectrum}
Suppose that $H_F$ has $q$ vertices, $p$ edges, and $c$ connected components.
Let
\[
\lambda_1,\ldots,\lambda_{q-c}
\]
be the positive Laplacian eigenvalues of $H_F$, counted with multiplicities.
Then the eigenvalues of $Q$ are
\[
\lambda_1,\ldots,\lambda_{q-c}
\]
and
\[
\underbrace{0,\ldots,0}_{p-q+c\text{ times}}.
\]
\end{lemma}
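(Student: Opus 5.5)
We plan to obtain the statement from Lemma \ref{lem:nonzero-spectrum}, applied to a restricted incidence matrix. Let $B_H$ be the $q\times p$ matrix formed by the rows of $B$ indexed by the vertices of $V(F)$. Every row of $B$ indexed by a vertex outside $V(F)$ is zero, since such a vertex lies on no deleted edge. Hence
\[
Q=B^TB=B_H^TB_H .
\]
Moreover, $B_H$ is precisely an oriented incidence matrix of the deletion graph $H_F$. Arguing exactly as in Lemma \ref{lem:laplacian-defect}, we get
\[
B_HB_H^T=\sum_{\alpha=1}^{p}(b_\alpha b_\alpha^T)|_{V(F)}=L(H_F),
\]
the Laplacian matrix of $H_F$, which is a $q\times q$ matrix.

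Next we determine the spectrum of $L(H_F)$. It is standard that the multiplicity of the eigenvalue $0$ of a graph Laplacian equals the number of connected components. To justify this, note that
\[
x^TL(H_F)x=\sum_{\{u,v\}\in F}(x_u-x_v)^2 .
\]
This sum vanishes exactly when $x$ is constant on each component of $H_F$. So $\ker L(H_F)$ has dimension $c$, and $L(H_F)$ has exactly $q-c$ positive eigenvalues $\lambda_1,\ldots,\lambda_{q-c}$, counted with multiplicity.

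By Lemma \ref{lem:nonzero-spectrum}, applied to $B_H$ with $q$ in place of $n$, the nonzero eigenvalues of $Q=B_H^TB_H$ coincide with those of $L(H_F)$, including multiplicities. So $Q$ has the eigenvalues $\lambda_1,\ldots,\lambda_{q-c}$. Since $Q$ is a real symmetric $p\times p$ matrix, its remaining $p-(q-c)=p-q+c$ eigenvalues must all be zero. This is the claimed spectrum. Note that $p-q+c\ge 0$ holds automatically, because $\operatorname{rank}Q=\operatorname{rank}L(H_F)=q-c\le p$.

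The only point that needs some care is the restriction step. We must check that deleting the zero rows changes neither $B^TB$ nor the identification $B_HB_H^T=L(H_F)$. Both follow directly from the entrywise description of the columns $b_\alpha$. Beyond that, the argument only uses the standard kernel computation for Laplacians, which we include for completeness.
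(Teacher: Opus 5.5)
Your proof is correct and follows essentially the same route as the paper: identify $BB^T$ (your $B_HB_H^T$ after dropping the zero rows) with $L(H_F)$, then transfer its nonzero spectrum to $Q$ via Lemma \ref{lem:nonzero-spectrum}. The only minor difference is how you get the multiplicity $p-q+c$ of the zero eigenvalue. You obtain it by counting, which is valid because Lemma \ref{lem:nonzero-spectrum} already accounts for multiplicities, whereas the paper computes $\operatorname{rank}Q=\operatorname{rank}B=q-c$ separately.
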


\begin{proof}
First, we check the relation between $BB^T$ and the Laplacian matrix
of $H_F$.
The columns of $B$ are the incidence vectors
\[
b_1,\ldots,b_p
\]
obtained by orienting the deleted edges
\[
F=\{e_1,\ldots,e_p\}.
\]
Therefore,
\[
BB^T
=
\sum_{\alpha=1}^{p}b_\alpha b_\alpha^T.
\]
Each matrix $b_\alpha b_\alpha^T$ is the Laplacian matrix of the edge
$e_\alpha$.
Hence this sum coincides with the Laplacian matrix of $H_F$.
Here, however, $BB^T$ is defined as an $n\times n$ matrix on the vertex set
of $K_n$.
The rows and columns corresponding to the remaining $n-q$ vertices which
do not belong to $H_F$ are all zero.
Thus, after a suitable permutation of the vertices, we have
\[
BB^T
\sim
\begin{bmatrix}
L(H_F) & O\\
O & O
\end{bmatrix}.
\]
Here $\sim$ means similarity by a permutation matrix.

Since $H_F$ has $q$ vertices and $c$ connected components, the multiplicity
of the zero eigenvalue of $L(H_F)$ is $c$.
Therefore, its positive eigenvalues are
\[
\lambda_1,\ldots,\lambda_{q-c},
\]
counted with multiplicities.
By the above block form, the nonzero eigenvalues of $BB^T$ are also
\[
\lambda_1,\ldots,\lambda_{q-c}.
\]

Next, by Lemma \ref{lem:nonzero-spectrum}, the nonzero eigenvalues of
$BB^T$ and
\[
Q=B^TB
\]
are the same, counted with multiplicities.
Therefore, the nonzero eigenvalues of $Q$ are
\[
\lambda_1,\ldots,\lambda_{q-c}.
\]

It remains to determine the multiplicity of the zero eigenvalue of $Q$.
Let $\widehat{B}$ be the matrix obtained from $B$ by taking only the $q$
rows corresponding to the vertices of $H_F$.
Then $\widehat{B}$ is the incidence matrix of the oriented graph $H_F$.

For a vector $x\in\mathbb{R}^q$, the condition
\[
\widehat{B}^Tx=\boldsymbol{0}
\]
is equivalent to
\[
x_u-x_v=0
\]
for every edge $uv\in F$.
Thus $x$ is constant on each connected component of $H_F$.
Hence
\[
\dim\ker \widehat{B}^T=c.
\]
By the rank-nullity theorem,
\[
\operatorname{rank}\widehat{B}^T
=
q-c.
\]
Therefore,
\[
\operatorname{rank}B
=
\operatorname{rank}\widehat{B}
=
q-c.
\]

Also,
\[
\ker(B^TB)=\ker B.
\]
Indeed, if
\[
B^TBy=\boldsymbol{0},
\]
then
\[
0
=
y^TB^TBy
=
\|By\|^2.
\]
Hence $By=\boldsymbol{0}$, and the converse is clear.
Therefore,
\[
\operatorname{rank}Q
=
\operatorname{rank}(B^TB)
=
\operatorname{rank}B
=
q-c.
\]
Since $Q$ is a $p\times p$ matrix, the multiplicity of its zero eigenvalue is
\[
p-\operatorname{rank}Q
=
p-(q-c)
=
p-q+c.
\]

Thus the eigenvalues of $Q$ are
\[
\lambda_1,\ldots,\lambda_{q-c}
\]
and
\[
\underbrace{0,\ldots,0}_{p-q+c\text{ times}}.
\]
\end{proof}

%%%%%%%%%%%%%%%%%%%%%%%%%%%%%%%%%%%%%%%%
\subsection{Majorization}
%%%%%%%%%%%%%%%%%%%%%%%%%%%%%%%%%%%%%%%%

For a real vector
\[
x=(x_1,\ldots,x_p),
\]
we denote by
\[
x^\downarrow
=
(x_1^\downarrow,\ldots,x_p^\downarrow)
\]
the vector obtained by arranging the components of $x$ in decreasing order.

%%%%%%%%%%%%%%%%%%%%%%%%%%%%%%%%%%%%%%%%
\begin{definition}
\label{def:majorization}
For $x,y\in\mathbb{R}^p$, we write
\[
x\succ y
\]
if
\[
\sum_{i=1}^{k}x_i^\downarrow
\ge
\sum_{i=1}^{k}y_i^\downarrow
\qquad
(1\le k\le p-1)
\]
and
\[
\sum_{i=1}^{p}x_i^\downarrow
=
\sum_{i=1}^{p}y_i^\downarrow
\]
hold.
\end{definition}

We use the following standard results
\cite{MarshallOlkinArnold2011,Bhatia1997}.

%%%%%%%%%%%%%%%%%%%%%%%%%%%%%%%%%%%%%%%%
\begin{theorem}
[Karamata's inequality
{\cite[Theorem 1.A.3]{MarshallOlkinArnold2011}}]
\label{thm:karamata}
Let $I\subseteq\mathbb{R}$ be an interval.
Suppose that $x,y\in I^p$ satisfy $x\succ y$.
If $f\colon I\to\mathbb{R}$ is a convex function, then
\[
\sum_{i=1}^{p}f(x_i)
\ge
\sum_{i=1}^{p}f(y_i).
\]
Moreover, if $f$ is strictly convex and $x$ and $y$ are not the same
up to permutation, then the inequality is strict.
\end{theorem}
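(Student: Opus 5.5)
We prove Karamata's inequality directly by Abel summation (summation by parts), using the monotonicity of chord slopes of a convex function. Since both sides are symmetric in the coordinates, we may assume $x=x^\downarrow$ and $y=y^\downarrow$, that is, $x_1\ge\cdots\ge x_p$ and $y_1\ge\cdots\ge y_p$. Put $X_k=\sum_{i\le k}x_i$ and $Y_k=\sum_{i\le k}y_i$. Then $x\succ y$ means $X_k\ge Y_k$ for $k<p$ and $X_p=Y_p$.

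\emph{Slopes.} For each $i$ define
\[
c_i=\frac{f(x_i)-f(y_i)}{x_i-y_i}
\]
when $x_i\ne y_i$. When $x_i=y_i$, let $c_i$ be a one-sided derivative (a subgradient slope) of $f$ at $x_i$, with the endpoint adjustment described below. In both cases
\[
f(x_i)-f(y_i)=c_i(x_i-y_i).
\]

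\emph{Monotonicity.} For convex $f$, the chord slope $s(a,b)=\frac{f(a)-f(b)}{a-b}$ is nondecreasing in each argument. Derivative values at a point lie between chord slopes to the left and to the right of it. Since $x_i\ge x_{i+1}$ and $y_i\ge y_{i+1}$, this gives
\[
c_1\ge c_2\ge\cdots\ge c_p.
\]

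\emph{Abel summation.} Writing $x_i-y_i=(X_i-X_{i-1})-(Y_i-Y_{i-1})$, we get
\[
\sum_{i=1}^p\bigl(f(x_i)-f(y_i)\bigr)=\sum_{k=1}^{p-1}(c_k-c_{k+1})(X_k-Y_k)+c_p(X_p-Y_p).
\]
The last term vanishes because $X_p=Y_p$. Every other term is a product of two nonnegative factors. Hence the sum is $\ge0$, which is the inequality.

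\emph{Main obstacle: the case $x_i=y_i$.} This is the step I expect to need the most care. One has to choose $c_i$ so that monotonicity still holds. The difficulty arises when $x_i$ is an endpoint of $I$, where a one-sided derivative may be $\pm\infty$ or undefined. My first attempt is as follows. Such a term contributes $0$ to the left-hand side, so $c_i$ may be any real number. Choose it between $c_{i+1}$ and $c_{i-1}$, which is possible once the neighbouring finite slopes are known to satisfy $c_{i-1}\ge c_{i+1}$. That inequality again follows from chord-slope monotonicity: all relevant points of indices $\le i-1$ are $\ge x_i$, and all points of indices $\ge i+1$ are $\le x_i$. Runs of consecutive equal indices are handled by the same choice. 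Alternatively, one could first prove the statement for $x_i\ne y_i$ for all $i$ and pass to the limit by continuity of $f$ in the interior of $I$. I would rather avoid this, because of the endpoints.

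\emph{Strictness.} Let $f$ be strictly convex and suppose $x\ne y$ as sorted vectors. Let $k$ be the first index with $x_k\ne y_k$. Then $x_i=y_i$ for $i<k$, so $X_{k-1}=Y_{k-1}$, and together with $X_k\ge Y_k$ this gives $x_k>y_k$. Because $X_p=Y_p$, some later index $j>k$ has $x_j<y_j$. Take $j$ minimal. We have $X_m>Y_m$ for all $k\le m<j$: indeed $X_k>Y_k$, and for $k<m<j$ we have $x_m\ge y_m$.

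For strictly convex $f$, the chord slopes are strictly increasing in each argument when the intervals differ. Compare $c_k=s(x_k,y_k)$ with $c_j=s(y_j,x_j)$. We have $x_k\ge x_j$ and $y_k\ge y_j$, with $x_k>y_k$ and $y_j>x_j$, so the two pairs of points are not identical. Hence $c_k>c_j$, and therefore $c_m>c_{m+1}$ for some $m$ with $k\le m<j$. For this $m$ the summand $(c_m-c_{m+1})(X_m-Y_m)$ is strictly positive, and the inequality is strict.
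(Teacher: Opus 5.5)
The paper gives no proof of this statement. It is imported from Marshall--Olkin--Arnold and used only in Theorem~\ref{thm:majorization-principle}, with $f(t)=t/(n-t)$ on $[0,n)$.

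Your argument is the classical Abel-summation proof, and it is correct, including the strictness part. Take $k$ to be the first index where the sorted vectors differ, and $j>k$ the first index with $x_j<y_j$. Then $x_j<y_j\le y_k$ and $y_j\le y_k<x_k$, so both endpoints of the chord interval $[x_j,y_j]$ lie strictly to the left of those of $[y_k,x_k]$. For strictly convex $f$ this gives $c_k>c_j$. Since $X_m-Y_m>0$ for $k\le m<j$, some summand is then strictly positive.

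Compare this with the other textbook route: write $y=xP$ with $P$ doubly stochastic (Hardy--Littlewood--P\'olya), then apply Jensen to each column. Your proof has three advantages over it:
\begin{itemize}
\item it is self-contained;
\item it needs no continuity of $f$, since chord-slope monotonicity is just the definition of convexity and so holds up to closed endpoints, where a convex function may jump;
\item it delivers the strict inequality directly by exhibiting a positive summand.
\end{itemize}
The doubly stochastic route instead rests on a nontrivial characterization of majorization and needs a separate equality analysis in Jensen.

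One cleanup: drop the subgradient prescription for indices with $x_i=y_i$. It is unnecessary and fragile. A one-sided derivative can be infinite at a closed endpoint, e.g.\ $f(t)=-\sqrt t$ at $0$. Also, choosing a left derivative at one copy of a repeated kink point and a right derivative at the next would violate $c_i\ge c_{i+1}$.

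Your free-choice device is what actually works, and it can be stated more simply. Such indices contribute $0$ to both sides and do not change $X_m-Y_m$ at the remaining indices, so you may delete them. The remaining subsequences are still decreasing and still satisfy the majorization conditions. After that, only slopes of genuine chords ever need to be compared.
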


%%%%%%%%%%%%%%%%%%%%%%%%%%%%%%%%%%%%%%%%
\begin{theorem}
[Cauchy's interlacing theorem
{\cite[Corollary III.1.5]{Bhatia1997}}]
\label{thm:cauchy-interlacing}
Let $A$ be a $p\times p$ real symmetric matrix, and let its eigenvalues be
\[
\lambda_1\ge\cdots\ge\lambda_p.
\]
Let $A'$ be a $q\times q$ principal submatrix of $A$, and let its eigenvalues be
\[
\mu_1\ge\cdots\ge\mu_q.
\]
Then
\[
\lambda_i
\ge
\mu_i
\ge
\lambda_{i+p-q}
\qquad
(1\le i\le q)
\]
holds.
\end{theorem}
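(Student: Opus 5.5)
This statement is quoted from the literature and is applied later in the paper, not proved there; the sketch below outlines the standard argument.

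The plan is a variational one, via the Courant--Fischer min--max characterization. Write $\lambda_i=\max_{\dim U=i}\min_{u\in U,\|u\|=1}u^TAu$ and, dually, $\lambda_i=\min_{\dim U=p-i+1}\max_{u\in U,\|u\|=1}u^TAu$. After a permutation similarity, which does not change eigenvalues, we may assume that $A'$ is the leading $q\times q$ block. Vectors in $\mathbb{R}^q$ then embed isometrically into $\mathbb{R}^p$ by padding with zeros, and for $w\in\mathbb{R}^q$ with padded vector $\tilde w$ we have $w^TA'w=\tilde w^TA\tilde w$.

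For the upper bound $\lambda_i\ge\mu_i$, take an $i$-dimensional subspace $W\subseteq\mathbb{R}^q$ attaining the max--min for $\mu_i$. Its padded image $\widetilde W$ is an $i$-dimensional subspace of $\mathbb{R}^p$ on which the Rayleigh quotient of $A$ is at least $\mu_i$. By the max--min formula this gives $\lambda_i\ge\mu_i$.

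For the lower bound $\mu_i\ge\lambda_{i+p-q}$, use the min--max form. For $\mu_i$, choose $W\subseteq\mathbb{R}^q$ of dimension $q-i+1$ on which the Rayleigh quotient of $A'$ is at most $\mu_i$. Its padded image has dimension $q-i+1=p-(i+p-q)+1$. By the min--max formula for the index $i+p-q$, this forces $\lambda_{i+p-q}\le\mu_i$. The index stays in range because $1\le i\le q$ implies $i+p-q\le p$.

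The only delicate step is matching the dimension counts correctly in the min--max formula. One can alternatively apply the first inequality to $-A$ and $-A'$, which reverses the eigenvalue order and yields the second inequality directly. No result specific to this paper is needed.
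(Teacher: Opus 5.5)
Your Courant--Fischer argument is correct, including the dimension count $q-i+1=p-(i+p-q)+1$ for the lower bound and the alternative derivation via $-A$ and $-A'$. The paper gives no proof of its own and only cites Bhatia, where interlacing is likewise obtained from the minimax principle, so your route is the standard one.
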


%%%%%%%%%%%%%%%%%%%%%%%%%%%%%%%%%%%%%%%%
\section{Effective resistance formula via the edge-defect matrix}
\label{sec:resistance-formula}
%%%%%%%%%%%%%%%%%%%%%%%%%%%%%%%%%%%%%%%%

We use the notation of the previous section.
Thus, let
\[
F=\{e_1,\ldots,e_p\}\subseteq E(K_n),
\qquad
G=K_n-F,
\]
and assume that $G$ is connected.
Let $B$ be the incidence matrix of $F$, and let
\[
Q=B^TB
\]
be the corresponding edge-defect matrix.
For two distinct vertices $x,y\in V(G)$, put
\[
\eta_{xy}=B^T(e_x-e_y).
\]

%%%%%%%%%%%%%%%%%%%%%%%%%%%%%%%%%%%%%%%%
\begin{theorem}
\label{thm:edge-defect-resolvent}
Assume that $G=K_n-F$ is connected.
For two distinct vertices $x,y\in V(G)$, put
\[
\eta_{xy}
=
B^T(e_x-e_y).
\]
Then
\[
R_G(x,y)
=
\frac{2}{n}
+
\frac{1}{n}
\eta_{xy}^T
(nI_p-Q)^{-1}
\eta_{xy}.
\]
\end{theorem}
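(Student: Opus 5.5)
The plan is to express $L(G)^+$ through the matrix $nI_n-BB^T$ and then apply Lemma \ref{lem:resolvent-reduction}. Write $L=L(G)=nI_n-J_n-BB^T$, as in Lemma \ref{lem:laplacian-defect}, and put $M=nI_n-BB^T$. By Lemma \ref{lem:positive-defect-resolvent}, $nI_p-Q$ is nonsingular. Hence Lemma \ref{lem:resolvent-reduction} shows that $M$ is invertible, with
\[
M^{-1}=\frac{1}{n}I_n+\frac{1}{n}B(nI_p-Q)^{-1}B^T.
\]
The key structural fact is that $B^T\boldsymbol{1}=\boldsymbol{0}$, since each column $b_\alpha$ sums to zero. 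Consequently $BB^T\boldsymbol{1}=\boldsymbol{0}$ and $M\boldsymbol{1}=n\boldsymbol{1}$, so $M^{-1}\boldsymbol{1}=\frac{1}{n}\boldsymbol{1}$. Moreover, $M$ maps $\boldsymbol{1}^\perp$ into itself, because $\operatorname{Im}B\subseteq\boldsymbol{1}^\perp$ as in the proof of Lemma \ref{lem:positive-defect-resolvent}. The same therefore holds for $M^{-1}$.

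Next I will identify $L^+$. On $\boldsymbol{1}^\perp$ we have $J_n=0$, so $L$ and $M$ agree there. Since $G$ is connected, $\ker L=\operatorname{span}\{\boldsymbol{1}\}$, and $L$ restricted to $\boldsymbol{1}^\perp$ is invertible. The Moore--Penrose inverse $L^+$ is the inverse of this restriction, extended by zero on $\boldsymbol{1}$. Because $M$ preserves $\boldsymbol{1}^\perp$ and coincides with $L$ there, $L^+w=M^{-1}w$ for every $w\in\boldsymbol{1}^\perp$. Equivalently, one could verify the global formula $L^+=M^{-1}-\frac{1}{n^2}J_n$, but this is not needed.

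Finally, $w=e_x-e_y\in\boldsymbol{1}^\perp$, so
\[
R_G(x,y)=w^TM^{-1}w=\frac{1}{n}\|w\|^2+\frac{1}{n}(B^Tw)^T(nI_p-Q)^{-1}(B^Tw).
\]
Since $\|w\|^2=2$ for $x\ne y$ and $B^Tw=\eta_{xy}$, this gives the stated formula.

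The only step requiring care is the identification $L^+w=M^{-1}w$ on $\boldsymbol{1}^\perp$. This needs both the invariance of $\boldsymbol{1}^\perp$ under $M$ (from $B^T\boldsymbol{1}=\boldsymbol{0}$) and the connectivity assumption, which ensures that $\ker L$ is exactly $\operatorname{span}\{\boldsymbol{1}\}$. Everything else is a direct substitution.
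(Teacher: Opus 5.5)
Your proof is correct and follows the paper's argument. You identify $L(G)^+$ with $(nI_n-BB^T)^{-1}$ on $\boldsymbol{1}^{\perp}$ using $B^T\boldsymbol{1}=\boldsymbol{0}$ and connectivity, then apply Lemma \ref{lem:resolvent-reduction} (with $nI_p-Q$ nonsingular by Lemma \ref{lem:positive-defect-resolvent}) and substitute $\|e_x-e_y\|^2=2$ and $B^T(e_x-e_y)=\eta_{xy}$; the only differences are cosmetic: you get invertibility of $nI_n-BB^T$ directly from Lemma \ref{lem:resolvent-reduction}, and you explicitly check that it preserves $\boldsymbol{1}^{\perp}$, a step the paper compresses into ``it follows that.''
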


\begin{proof}
Put
\[
v=e_x-e_y.
\]
Then
\[
\boldsymbol{1}^Tv=0,
\]
and hence
\[
v\in\boldsymbol{1}^{\perp}.
\]
By the standard expression of effective resistance in terms of the
Moore--Penrose inverse, we have
\[
R_G(x,y)
=
v^TL(G)^+v.
\]

By Lemma \ref{lem:laplacian-defect},
\[
L(G)
=
nI_n-J_n-BB^T.
\]
For any $z\in\boldsymbol{1}^{\perp}$, we have
\[
J_nz=\boldsymbol{0}.
\]
Therefore,
\[
L(G)z
=
(nI_n-BB^T)z.
\]
Thus $L(G)$ and $nI_n-BB^T$ coincide on
$\boldsymbol{1}^{\perp}$.

Since $G$ is connected, the kernel of $L(G)$ is
\[
\ker L(G)
=
\operatorname{span}\{\boldsymbol{1}\},
\]
and $L(G)$ is positive definite on $\boldsymbol{1}^{\perp}$.
Moreover, each column of $B$ is orthogonal to $\boldsymbol{1}$, and so
\[
B^T\boldsymbol{1}=\boldsymbol{0}.
\]
Hence
\[
(nI_n-BB^T)\boldsymbol{1}
=
n\boldsymbol{1}.
\]
It follows that $nI_n-BB^T$ is nonsingular on $\mathbb{R}^n$, and for
\(v\in\boldsymbol{1}^{\perp}$, the actions of $L(G)^+$ and
$(nI_n-BB^T)^{-1}$ agree. Hence
\[
L(G)^+v
=
(nI_n-BB^T)^{-1}v.
\]
Therefore,
\[
R_G(x,y)
=
v^T(nI_n-BB^T)^{-1}v.
\]

By Lemma \ref{lem:positive-defect-resolvent}, the matrix
\[
nI_p-Q
\]
is positive definite, and hence nonsingular.
Thus Lemma \ref{lem:resolvent-reduction} gives
\[
(nI_n-BB^T)^{-1}
=
\frac{1}{n}I_n
+
\frac{1}{n}
B(nI_p-Q)^{-1}B^T.
\]
Substituting this into the above expression, we obtain
\[
\begin{aligned}
R_G(x,y)
&=
v^T
\left(
\frac{1}{n}I_n
+
\frac{1}{n}
B(nI_p-Q)^{-1}B^T
\right)
v\\
&=
\frac{1}{n}v^Tv
+
\frac{1}{n}
(B^Tv)^T
(nI_p-Q)^{-1}
(B^Tv).
\end{aligned}
\]
Since $x\ne y$, we have
\[
v^Tv
=
(e_x-e_y)^T(e_x-e_y)
=
2.
\]
Also, by the definition of $\eta_{xy}$,
\[
B^Tv=\eta_{xy}.
\]
Therefore,
\[
R_G(x,y)
=
\frac{2}{n}
+
\frac{1}{n}
\eta_{xy}^T
(nI_p-Q)^{-1}
\eta_{xy}.
\]
\end{proof}

The following corollary gives the spectral form of the effective resistance.

%%%%%%%%%%%%%%%%%%%%%%%%%%%%%%%%%%%%%%%%
\begin{corollary}
\label{thm:spectral-form}
Let
\[
\theta_1,\ldots,\theta_p
\]
be the eigenvalues of $Q$, and let
\[
u_1,\ldots,u_p
\]
be corresponding orthonormal eigenvectors.
Then, for two distinct vertices $x,y\in V(G)$,
\[
R_G(x,y)
=
\frac{2}{n}
+
\frac{1}{n}
\sum_{\alpha=1}^{p}
\frac{
\left(u_\alpha^T\eta_{xy}\right)^2
}{
n-\theta_\alpha
}.
\]
\end{corollary}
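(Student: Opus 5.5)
The plan is to start from Theorem \ref{thm:edge-defect-resolvent} and diagonalize the resolvent. Since $Q$ is real symmetric (Lemma \ref{lem:basic-Q} (1)), the spectral theorem gives an orthonormal eigenbasis $u_1,\ldots,u_p$ with $Qu_\alpha=\theta_\alpha u_\alpha$. Put $U=\begin{bmatrix}u_1&\cdots&u_p\end{bmatrix}$ and $\Theta=\operatorname{diag}(\theta_1,\ldots,\theta_p)$. Then $U$ is orthogonal and
\[
Q=U\Theta U^T,
\qquad
nI_p-Q=U(nI_p-\Theta)U^T .
\]

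Next I invoke Lemma \ref{lem:positive-defect-resolvent}. Since $G$ is connected, each $\theta_\alpha<n$, so $nI_p-\Theta$ is an invertible diagonal matrix. Hence
\[
(nI_p-Q)^{-1}=U(nI_p-\Theta)^{-1}U^T=\sum_{\alpha=1}^{p}\frac{u_\alpha u_\alpha^T}{n-\theta_\alpha}.
\]
This step can be checked directly by multiplying by $nI_p-Q$ and using $U^TU=UU^T=I_p$.

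Finally, I substitute this expansion into the quadratic form $\eta_{xy}^T(nI_p-Q)^{-1}\eta_{xy}$. By linearity this becomes
\[
\sum_\alpha \frac{\eta_{xy}^T u_\alpha u_\alpha^T\eta_{xy}}{n-\theta_\alpha}=\sum_\alpha\frac{(u_\alpha^T\eta_{xy})^2}{n-\theta_\alpha}.
\]
Inserting this into the formula of Theorem \ref{thm:edge-defect-resolvent} gives the stated expression.

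There is no real obstacle here. The only point needing care is that the denominators $n-\theta_\alpha$ are nonzero, and this is exactly what Lemma \ref{lem:positive-defect-resolvent} supplies under the connectivity assumption. The result does not depend on which orthonormal eigenbasis is chosen within repeated eigenspaces, because the sum equals the basis-independent quantity $\eta_{xy}^T(nI_p-Q)^{-1}\eta_{xy}$.
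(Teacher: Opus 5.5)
Your proposal is correct and matches the paper's proof: both diagonalize $Q$ orthonormally to get $(nI_p-Q)^{-1}=\sum_{\alpha}(n-\theta_\alpha)^{-1}u_\alpha u_\alpha^T$, with Lemma~\ref{lem:positive-defect-resolvent} ensuring $n-\theta_\alpha>0$, and then substitute into the quadratic form of Theorem~\ref{thm:edge-defect-resolvent}. Your remark that the sum does not depend on the choice of eigenbasis is a valid extra observation that the paper does not make.
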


\begin{proof}
By Lemma \ref{lem:positive-defect-resolvent}, we have
\[
n-\theta_\alpha>0
\qquad
(1\le\alpha\le p).
\]
Since $Q$ is real symmetric, it has an orthonormal eigenbasis
$u_1,\ldots,u_p$. Thus
\[
Q
=
\sum_{\alpha=1}^{p}
\theta_\alpha u_\alpha u_\alpha^T
\]
and
\[
I_p
=
\sum_{\alpha=1}^{p}
u_\alpha u_\alpha^T.
\]
Therefore,
\[
nI_p-Q
=
\sum_{\alpha=1}^{p}
(n-\theta_\alpha)u_\alpha u_\alpha^T,
\]
and hence
\[
(nI_p-Q)^{-1}
=
\sum_{\alpha=1}^{p}
\frac{1}{n-\theta_\alpha}
u_\alpha u_\alpha^T.
\]
By Theorem \ref{thm:edge-defect-resolvent},
\[
R_G(x,y)
=
\frac{2}{n}
+
\frac{1}{n}
\eta_{xy}^T
(nI_p-Q)^{-1}
\eta_{xy}.
\]
Substituting the above spectral decomposition, we obtain
\[
\begin{aligned}
R_G(x,y)
&=
\frac{2}{n}
+
\frac{1}{n}
\sum_{\alpha=1}^{p}
\frac{1}{n-\theta_\alpha}
\eta_{xy}^Tu_\alpha u_\alpha^T\eta_{xy}\\
&=
\frac{2}{n}
+
\frac{1}{n}
\sum_{\alpha=1}^{p}
\frac{
\left(u_\alpha^T\eta_{xy}\right)^2
}{
n-\theta_\alpha
}.
\end{aligned}
\]
This proves the corollary.
\end{proof}

%%%%%%%%%%%%%%%%%%%%%%%%%%%%%%%%%%%%%%%%
\section{Kirchhoff index and the number of spanning trees}
\label{sec:kirchhoff-spanning}
%%%%%%%%%%%%%%%%%%%%%%%%%%%%%%%%%%%%%%%%

Assume that $G=K_n-F$ is connected.
Let $B$ be the incidence matrix of $F$, and let
\[
Q=B^TB
\]
be the edge-defect matrix.

%%%%%%%%%%%%%%%%%%%%%%%%%%%%%%%%%%%%%%%%
\begin{theorem}
\label{thm:kirchhoff-spanning-defect}
Let
\[
\theta_1,\ldots,\theta_p
\]
be the eigenvalues of $Q$, including zero eigenvalues.
Then
\[
\begin{aligned}
Kf(K_n-F)
&=
n-1+
\operatorname{tr}
\left(
Q(nI_p-Q)^{-1}
\right)\\
&=
n-1+
\sum_{\alpha=1}^{p}
\frac{\theta_\alpha}{n-\theta_\alpha}.
\end{aligned}
\]
\end{theorem}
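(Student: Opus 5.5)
The plan is to start from Theorem~\ref{thm:kirchhoff-spectral}, $Kf(G)=n\operatorname{tr}L(G)^+$, and to express $L(G)^+$ through the resolvent $(nI_n-BB^T)^{-1}$, exactly as in the proof of Theorem~\ref{thm:edge-defect-resolvent}. Let $P=I_n-\frac1nJ_n$ be the orthogonal projection onto $\boldsymbol{1}^\perp$.

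The first step is to show that $L(G)^+=P(nI_n-BB^T)^{-1}P$. In the proof of Theorem~\ref{thm:edge-defect-resolvent} we saw that $L(G)$ and $nI_n-BB^T$ coincide on $\boldsymbol{1}^\perp$. Both operators preserve $\boldsymbol{1}^\perp$, because $B^T\boldsymbol{1}=\boldsymbol{0}$ and hence $\operatorname{Im}B\subseteq\boldsymbol{1}^\perp$. In addition, $(nI_n-BB^T)\boldsymbol{1}=n\boldsymbol{1}$. So $nI_n-BB^T$ equals $L(G)$ on $\boldsymbol{1}^\perp$ and acts as multiplication by $n$ on $\operatorname{span}\{\boldsymbol{1}\}$. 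Its inverse is therefore $L(G)^+$ on $\boldsymbol{1}^\perp$ and $\frac1n$ on $\boldsymbol{1}$. This gives
\[
\operatorname{tr}L(G)^+=\operatorname{tr}(nI_n-BB^T)^{-1}-\frac1n .
\]

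The second step takes traces in Lemma~\ref{lem:resolvent-reduction}, which applies because $nI_p-Q$ is nonsingular by Lemma~\ref{lem:positive-defect-resolvent}. By cyclicity of the trace,
\[
\operatorname{tr}(nI_n-BB^T)^{-1}=1+\frac1n\operatorname{tr}\bigl(B(nI_p-Q)^{-1}B^T\bigr)=1+\frac1n\operatorname{tr}\bigl(Q(nI_p-Q)^{-1}\bigr).
\]
Multiplying by $n$ and subtracting $1$ yields
\[
Kf=n\Bigl(1+\tfrac1n\operatorname{tr}\bigl(Q(nI_p-Q)^{-1}\bigr)\Bigr)-1=n-1+\operatorname{tr}\bigl(Q(nI_p-Q)^{-1}\bigr).
\]

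The final step diagonalizes $Q$ with an orthonormal eigenbasis, as in Corollary~\ref{thm:spectral-form}. Then $Q(nI_p-Q)^{-1}$ has eigenvalues $\theta_\alpha/(n-\theta_\alpha)$, and zero eigenvalues contribute nothing to the sum. This gives the second displayed form. The only delicate point is the first step, the identification of $L(G)^+$ with the restricted inverse and the correct $-\frac1n$ correction coming from the $\boldsymbol{1}$ direction. Everything else is a trace computation.
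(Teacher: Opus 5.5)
Your proof is correct, but it takes a different route from the paper's.

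The paper argues spectrally. Using Lemma~\ref{lem:nonzero-spectrum}, it identifies the nonzero Laplacian eigenvalues of $K_n-F$ as $n-\theta_1,\ldots,n-\theta_r$, together with $n$ of multiplicity $n-1-r$. It then sums their reciprocals in $Kf=n\sum_{i\ge2}1/\mu_i$, rewrites $n/(n-\theta)=1+\theta/(n-\theta)$, and reaches the trace form only at the end, by diagonalizing $Q$.

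You instead start from $Kf=n\operatorname{tr}L(G)^+$. The splitting $\mathbb{R}^n=\operatorname{span}\{\boldsymbol{1}\}\oplus\boldsymbol{1}^\perp$ gives $\operatorname{tr}L(G)^+=\operatorname{tr}(nI_n-BB^T)^{-1}-\frac1n$, and your bookkeeping of this $-\frac1n$ and of $\operatorname{tr}(\frac1n I_n)=1$ is right. Taking the trace of Lemma~\ref{lem:resolvent-reduction} and using cyclicity then yields $\operatorname{tr}\bigl(Q(nI_p-Q)^{-1}\bigr)$ directly, and the eigenvalue form comes last.

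What your route buys: it needs no multiplicity matching between $BB^T$ and $Q$. It also shows that the theorem is simply the trace of the resolvent identity behind Theorem~\ref{thm:edge-defect-resolvent}; equivalently, it is that resistance formula summed over all pairs, since $\sum_{x<y}\eta_{xy}\eta_{xy}^T=nQ$.

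What the paper's route buys: it produces the full Laplacian spectrum of $K_n-F$, which it reuses verbatim for $\tau(K_n-F)$ in Theorem~\ref{thm:spanning-tree-defect}. In your framework, the matching step would be the determinant analogue: combine $\det(nI_n-BB^T)=n\prod_{i\ge2}\mu_i=n^2\tau(K_n-F)$ with Sylvester's identity $\det(nI_n-BB^T)=n^{n-p}\det(nI_p-Q)$.
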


\begin{proof}
Let
\[
\theta_1,\ldots,\theta_r
\]
be the nonzero eigenvalues of $Q$.
By Lemma \ref{lem:laplacian-defect}, we have
\[
L(K_n-F)=nI_n-J_n-BB^T.
\]
Since
\[
\operatorname{Im}B\subseteq\boldsymbol{1}^{\perp},
\]
the restriction of $L(K_n-F)$ to $\boldsymbol{1}^{\perp}$ is equal to
the restriction of
\[
nI_n-BB^T
\]
to $\boldsymbol{1}^{\perp}$.

By Lemma \ref{lem:nonzero-spectrum}, the nonzero eigenvalues of $BB^T$
and
\[
Q=B^TB
\]
are the same, counted with multiplicities.
Therefore, the nonzero Laplacian eigenvalues of $K_n-F$ are
\[
n-\theta_1,\ldots,n-\theta_r
\]
and
\[
\underbrace{n,\ldots,n}_{n-1-r\text{ times}}.
\]
Hence, by Theorem \ref{thm:kirchhoff-spectral},
\[
\begin{aligned}
Kf(K_n-F)
&=
n\left(
\sum_{\alpha=1}^{r}
\frac{1}{n-\theta_\alpha}
+
\frac{n-1-r}{n}
\right)\\
&=
\sum_{\alpha=1}^{r}
\frac{n}{n-\theta_\alpha}
+
n-1-r.
\end{aligned}
\]
Since
\[
\frac{n}{n-\theta_\alpha}
=
1+
\frac{\theta_\alpha}{n-\theta_\alpha},
\]
we obtain
\[
\begin{aligned}
Kf(K_n-F)
&=
\sum_{\alpha=1}^{r}
\left(
1+
\frac{\theta_\alpha}{n-\theta_\alpha}
\right)
+
n-1-r\\
&=
n-1+
\sum_{\alpha=1}^{r}
\frac{\theta_\alpha}{n-\theta_\alpha}.
\end{aligned}
\]
The remaining eigenvalues of $Q$ are zero, and so
\[
\frac{\theta_\alpha}{n-\theta_\alpha}=0
\qquad
(r+1\le\alpha\le p).
\]
Therefore,
\[
Kf(K_n-F)
=
n-1+
\sum_{\alpha=1}^{p}
\frac{\theta_\alpha}{n-\theta_\alpha}.
\]

Finally, since $Q$ is real symmetric, it can be orthogonally diagonalized.
Thus the eigenvalues of
\[
Q(nI_p-Q)^{-1}
\]
are
\[
\frac{\theta_1}{n-\theta_1},\ldots,
\frac{\theta_p}{n-\theta_p}.
\]
Taking the trace, we get
\[
\operatorname{tr}
\left(
Q(nI_p-Q)^{-1}
\right)
=
\sum_{\alpha=1}^{p}
\frac{\theta_\alpha}{n-\theta_\alpha}.
\]
This proves the theorem.
\end{proof}

%%%%%%%%%%%%%%%%%%%%%%%%%%%%%%%%%%%%%%%%
\begin{theorem}
\label{thm:spanning-tree-defect}
Let
\[
\theta_1,\ldots,\theta_p
\]
be the eigenvalues of $Q$, including zero eigenvalues.
Then
\[
\begin{aligned}
\tau(K_n-F)
&=
n^{n-2}
\prod_{\alpha=1}^{p}
\left(
1-\frac{\theta_\alpha}{n}
\right)\\
&=
n^{n-p-2}\det(nI_p-Q).
\end{aligned}
\]
\end{theorem}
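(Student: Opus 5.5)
The plan is to mirror the proof of Theorem \ref{thm:kirchhoff-spanning-defect}, replacing the sum of reciprocals by the product of eigenvalues. The key input is the description of the Laplacian spectrum of $K_n-F$ obtained there.

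First, let $\theta_1,\ldots,\theta_r$ denote the nonzero eigenvalues of $Q$. By Lemma \ref{lem:laplacian-defect}, $L(K_n-F)=nI_n-J_n-BB^T$. Since $\operatorname{Im}B\subseteq\boldsymbol{1}^{\perp}$, the matrix $BB^T$ preserves $\boldsymbol{1}^{\perp}$ and annihilates $\boldsymbol{1}$. On $\boldsymbol{1}^{\perp}$, the Laplacian therefore acts as $nI_n-BB^T$. By Lemma \ref{lem:nonzero-spectrum}, the nonzero eigenvalues of $BB^T$ (all of whose eigenvectors for nonzero eigenvalues lie in $\operatorname{Im}B\subseteq\boldsymbol{1}^{\perp}$) are exactly $\theta_1,\ldots,\theta_r$. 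It follows that the eigenvalues of $L(K_n-F)$ on $\boldsymbol{1}^{\perp}$ are $n-\theta_1,\ldots,n-\theta_r$ together with $n$ repeated $n-1-r$ times. This is precisely the list of nonzero Laplacian eigenvalues $\mu_2,\ldots,\mu_n$ already used in the proof of Theorem \ref{thm:kirchhoff-spanning-defect}. By Lemma \ref{lem:positive-defect-resolvent}, these values are all positive, consistent with connectivity.

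Second, apply the Matrix--Tree part of Theorem \ref{thm:matrix-tree}:
\[
\tau(K_n-F)=\frac1n\, n^{\,n-1-r}\prod_{\alpha=1}^{r}(n-\theta_\alpha)
= n^{n-2}\prod_{\alpha=1}^{r}\Bigl(1-\frac{\theta_\alpha}{n}\Bigr).
\]
The zero eigenvalues $\theta_{r+1},\ldots,\theta_p$ each contribute a factor $1-0/n=1$, so the product may be extended to all $\alpha=1,\ldots,p$. This gives the first expression.

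Third, rewrite the product as
\[
\prod_{\alpha=1}^{p}\Bigl(1-\frac{\theta_\alpha}{n}\Bigr)=n^{-p}\prod_{\alpha=1}^{p}(n-\theta_\alpha).
\]
Since $Q$ is real symmetric, the eigenvalues of $nI_p-Q$ are $n-\theta_\alpha$, counted with multiplicity, so the last product equals $\det(nI_p-Q)$. This yields $n^{n-p-2}\det(nI_p-Q)$.

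There is no real obstacle. The only step needing care is the multiplicity bookkeeping in the first step: the factor $n$ appears exactly $n-1-r$ times on $\boldsymbol{1}^{\perp}$. This holds because $\boldsymbol{1}^{\perp}$ has dimension $n-1$, and $BB^T$ restricted to $\boldsymbol{1}^{\perp}$ has the nonzero spectrum $\theta_1,\ldots,\theta_r$, with the remaining $n-1-r$ eigenvalues equal to zero.
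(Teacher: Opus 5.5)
Your proposal is correct and follows the paper's proof essentially step for step. Both read off the nonzero Laplacian spectrum $n-\theta_1,\ldots,n-\theta_r$, $n$ (with multiplicity $n-1-r$) as in Theorem \ref{thm:kirchhoff-spanning-defect}, apply the Matrix--Tree formula, pad the product with the factors $1$ coming from the zero eigenvalues, and identify $\prod_{\alpha}(n-\theta_\alpha)$ with $\det(nI_p-Q)$.
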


\begin{proof}
Let
\[
\theta_1,\ldots,\theta_r
\]
be the nonzero eigenvalues of $Q$.
As in the proof of Theorem \ref{thm:kirchhoff-spanning-defect},
the nonzero Laplacian eigenvalues of $K_n-F$ are
\[
n-\theta_1,\ldots,n-\theta_r
\]
and
\[
\underbrace{n,\ldots,n}_{n-1-r\text{ times}}.
\]
Therefore, by the Matrix--Tree Theorem,
\[
\begin{aligned}
\tau(K_n-F)
&=
\frac{1}{n}
\left(
\prod_{\alpha=1}^{r}(n-\theta_\alpha)
\right)
n^{n-1-r}\\
&=
\frac{1}{n}
\left(
\prod_{\alpha=1}^{r}
n\left(
1-\frac{\theta_\alpha}{n}
\right)
\right)
n^{n-1-r}\\
&=
n^{n-2}
\prod_{\alpha=1}^{r}
\left(
1-\frac{\theta_\alpha}{n}
\right).
\end{aligned}
\]
Since
\[
\theta_{r+1}=\cdots=\theta_p=0,
\]
we have
\[
1-\frac{\theta_\alpha}{n}=1
\qquad
(r+1\le\alpha\le p).
\]
Hence
\[
\tau(K_n-F)
=
n^{n-2}
\prod_{\alpha=1}^{p}
\left(
1-\frac{\theta_\alpha}{n}
\right).
\]

On the other hand, the eigenvalues of $nI_p-Q$ are
\[
n-\theta_1,\ldots,n-\theta_p.
\]
Thus
\[
\begin{aligned}
\det(nI_p-Q)
&=
\prod_{\alpha=1}^{p}(n-\theta_\alpha)\\
&=
n^p
\prod_{\alpha=1}^{p}
\left(
1-\frac{\theta_\alpha}{n}
\right).
\end{aligned}
\]
Therefore,
\[
\prod_{\alpha=1}^{p}
\left(
1-\frac{\theta_\alpha}{n}
\right)
=
n^{-p}\det(nI_p-Q).
\]
Substituting this into the product formula for $\tau(K_n-F)$, we obtain
\[
\begin{aligned}
\tau(K_n-F)
&=
n^{n-2}n^{-p}\det(nI_p-Q)\\
&=
n^{n-p-2}\det(nI_p-Q).
\end{aligned}
\]
This completes the proof.
\end{proof}

%%%%%%%%%%%%%%%%%%%%%%%%%%%%%%%%%%%%%%%%
\section{Stability identity and a refinement of the Xu--Das--Zhang bound}
\label{sec:stability}
%%%%%%%%%%%%%%%%%%%%%%%%%%%%%%%%%%%%%%%%

In this section, we use the spectrum of the edge-defect matrix to describe
the excess from the Xu--Das--Zhang type lower bound.

%%%%%%%%%%%%%%%%%%%%%%%%%%%%%%%%%%%%%%%%
\begin{theorem}
\label{thm:exact-stability}
Let $n\ge3$, and assume that $K_n-F$ is connected.
Then
\[
\begin{aligned}
Kf(K_n-F)
&=
n-1+\frac{2p}{n-2}+\frac{n}{(n-2)^2}\operatorname{tr}
\left((Q-2I_p)^2(nI_p-Q)^{-1}\right).
\end{aligned}
\]
Equivalently,
\[
Kf(K_n-F)
=n-1+\frac{2p}{n-2}+\frac{n}{(n-2)^2}
\sum_{\alpha=1}^{p}\frac{(\theta_\alpha-2)^2}{n-\theta_\alpha}.
\]
\end{theorem}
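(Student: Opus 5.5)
The plan is to start from Theorem \ref{thm:kirchhoff-spanning-defect}, which gives
\[
Kf(K_n-F)=n-1+\sum_{\alpha=1}^{p}\frac{\theta_\alpha}{n-\theta_\alpha},
\]
and then prove a scalar identity for each eigenvalue. Lemma \ref{lem:positive-defect-resolvent} guarantees $0\le\theta_\alpha<n$, so every denominator is positive, and $n\ge3$ ensures $n-2\neq0$. It therefore suffices to show that for every real $\theta\ne n$,
\[
\frac{\theta}{n-\theta}
=
\frac{2}{n-2}
+\frac{\theta-2}{n-2}
+\frac{n}{(n-2)^2}\cdot\frac{(\theta-2)^2}{n-\theta}.
\]
Summing this over $\alpha$ then gives the result. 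The constant terms contribute $2p/(n-2)$. The linear terms contribute $\sum_\alpha(\theta_\alpha-2)/(n-2)=(\operatorname{tr}Q-2p)/(n-2)=0$ by Lemma \ref{lem:basic-Q}(1). What remains is exactly the stated correction term.

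To verify the scalar identity, I would expand $f(\theta)=\theta/(n-\theta)$ around $\theta=2$, that is, write $f$ as its first-order Taylor polynomial at $2$ plus an exact remainder. We have $f(2)=2/(n-2)$ and $f'(\theta)=n/(n-\theta)^2$, so $f'(2)=n/(n-2)^2$.

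The exact remainder can be computed as follows. First,
\[
f(\theta)-f(2)=\frac{n(\theta-2)}{(n-\theta)(n-2)}.
\]
Subtracting $f'(2)(\theta-2)$ from this gives
\[
\frac{n(\theta-2)}{n-2}\left(\frac{1}{n-\theta}-\frac{1}{n-2}\right)
=\frac{n(\theta-2)^2}{(n-2)^2(n-\theta)}.
\]
So the identity actually holds with linear coefficient $n/(n-2)^2$, not $1/(n-2)$ as I first guessed. This does not matter, because the linear term disappears in the sum whatever its coefficient, since $\sum_\alpha(\theta_\alpha-2)=0$.

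For the matrix form, I would apply the same computation at the operator level. Since $Q$ commutes with $(nI_p-Q)^{-1}$, the scalar identity lifts to
\[
Q(nI_p-Q)^{-1}=\frac{2}{n-2}I_p+\frac{n}{(n-2)^2}(Q-2I_p)+\frac{n}{(n-2)^2}(Q-2I_p)^2(nI_p-Q)^{-1}.
\]
Taking traces and using $\operatorname{tr}(Q-2I_p)=0$ gives the trace version of the statement. Alternatively, one can diagonalize $Q$ orthogonally, as in the proof of Theorem \ref{thm:kirchhoff-spanning-defect}.

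There is no real obstacle here. The only step needing care is getting the linear coefficient right and noticing that it vanishes by $\operatorname{tr}Q=2p$.
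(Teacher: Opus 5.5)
Your proof is correct and matches the paper's: the paper uses the same scalar identity $\frac{t}{n-t}=\frac{2}{n-2}+\frac{n(t-2)}{(n-2)^2}+\frac{n(t-2)^2}{(n-2)^2(n-t)}$, sums it over the eigenvalues, removes the linear term using $\operatorname{tr}Q=2p$, and obtains the trace form by orthogonal diagonalization. One small fix: your opening ``it suffices to show'' display, with linear coefficient $1/(n-2)$, is false for individual $\theta$ and only becomes true after summing over $\alpha$, so the reduction should be stated with the coefficient $n/(n-2)^2$ that you later derive.
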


\begin{proof}
By Theorem \ref{thm:kirchhoff-spanning-defect}, we have
\[
Kf(K_n-F)
=
n-1+
\sum_{\alpha=1}^{p}
\frac{\theta_\alpha}{n-\theta_\alpha}.
\]
Thus it is enough to rewrite each term
\[
\frac{\theta_\alpha}{n-\theta_\alpha}.
\]
By Lemma \ref{lem:positive-defect-resolvent}, every eigenvalue of $Q$
satisfies
\[
\theta_\alpha<n.
\]
Hence all denominators below are nonzero.

For any real number $t<n$, we have the identity
\[
\frac{t}{n-t}
=
\frac{2}{n-2}
+
\frac{n(t-2)}{(n-2)^2}
+
\frac{n(t-2)^2}{(n-2)^2(n-t)}.
\]
Indeed, after taking a common denominator, the right-hand side becomes
\[
\begin{aligned}
&\frac{
2(n-2)(n-t)
+
n(t-2)(n-t)
+
n(t-2)^2
}{
(n-2)^2(n-t)
}
\\
&=
\frac{
2(n-2)(n-t)
+
n(t-2)\{(n-t)+(t-2)\}
}{
(n-2)^2(n-t)
}
\\
&=
\frac{
2(n-2)(n-t)
+
n(t-2)(n-2)
}{
(n-2)^2(n-t)
}
\\
&=
\frac{
(n-2)\{2(n-t)+n(t-2)\}
}{
(n-2)^2(n-t)
}
=
\frac{t}{n-t}.
\end{aligned}
\]
Applying this identity to $t=\theta_\alpha$, we obtain
\[
\frac{\theta_\alpha}{n-\theta_\alpha}
=
\frac{2}{n-2}
+
\frac{n(\theta_\alpha-2)}{(n-2)^2}
+
\frac{n(\theta_\alpha-2)^2}{(n-2)^2(n-\theta_\alpha)}.
\]
Summing over $\alpha=1,\ldots,p$, we get
\[
\begin{aligned}
\sum_{\alpha=1}^{p}
\frac{\theta_\alpha}{n-\theta_\alpha}
&=
\frac{2p}{n-2}
+
\frac{n}{(n-2)^2}
\sum_{\alpha=1}^{p}(\theta_\alpha-2)
\\
&\quad+
\frac{n}{(n-2)^2}
\sum_{\alpha=1}^{p}
\frac{(\theta_\alpha-2)^2}{n-\theta_\alpha}.
\end{aligned}
\]
Since the sum of the eigenvalues of $Q$ is equal to $\operatorname{tr}Q$,
and since $\operatorname{tr}Q=2p$ by Lemma \ref{lem:basic-Q} (1), we have
\[
\sum_{\alpha=1}^{p}(\theta_\alpha-2)
=
\operatorname{tr}Q-2p
=
0.
\]
Therefore,
\[
\sum_{\alpha=1}^{p}
\frac{\theta_\alpha}{n-\theta_\alpha}
=
\frac{2p}{n-2}
+
\frac{n}{(n-2)^2}
\sum_{\alpha=1}^{p}
\frac{(\theta_\alpha-2)^2}{n-\theta_\alpha}.
\]
Substituting this into the formula of Theorem
\ref{thm:kirchhoff-spanning-defect}, we obtain
\[
Kf(K_n-F)
=
n-1+\frac{2p}{n-2}
+
\frac{n}{(n-2)^2}
\sum_{\alpha=1}^{p}
\frac{(\theta_\alpha-2)^2}{n-\theta_\alpha}.
\]

It remains to prove the trace form.
Since $Q$ is real symmetric, it can be orthogonally diagonalized as
\[
Q
=
U
\operatorname{diag}
(\theta_1,\ldots,\theta_p)
U^T.
\]
Then
\[
Q-2I_p
=
U
\operatorname{diag}
(\theta_1-2,\ldots,\theta_p-2)
U^T
\]
and
\[
(nI_p-Q)^{-1}
=
U
\operatorname{diag}
\left(
\frac{1}{n-\theta_1},
\ldots,
\frac{1}{n-\theta_p}
\right)
U^T.
\]
Thus
\[
\begin{aligned}
&(Q-2I_p)^2(nI_p-Q)^{-1}
\\
&=
U
\operatorname{diag}
\left(
\frac{(\theta_1-2)^2}{n-\theta_1},
\ldots,
\frac{(\theta_p-2)^2}{n-\theta_p}
\right)
U^T.
\end{aligned}
\]
Taking the trace gives
\[
\operatorname{tr}
\left(
(Q-2I_p)^2(nI_p-Q)^{-1}
\right)
=
\sum_{\alpha=1}^{p}
\frac{(\theta_\alpha-2)^2}{n-\theta_\alpha}.
\]
Hence the two expressions are equivalent.
\end{proof}

%%%%%%%%%%%%%%%%%%%%%%%%%%%%%%%%%%%%%%%%
\begin{corollary}
\label{cor:xu-das-zhang-recovery}
Let $n\ge3$, and assume that $K_n-F$ is connected.
Then
\[
Kf(K_n-F)
\ge
n-1+\frac{2p}{n-2}.
\]
Equality holds if and only if
\[
H_F\cong pK_2.
\]
\end{corollary}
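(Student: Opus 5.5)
The plan is to read the corollary directly off the stability identity of Theorem \ref{thm:exact-stability}. Since $K_n-F$ is connected, Lemma \ref{lem:positive-defect-resolvent} gives $0\le\theta_\alpha<n$ for every eigenvalue of $Q$. Hence each term
\[
\frac{(\theta_\alpha-2)^2}{n-\theta_\alpha}
\]
is nonnegative, and the coefficient $n/(n-2)^2$ is positive because $n\ge3$. Dropping the correction term in the identity of Theorem \ref{thm:exact-stability} therefore yields
\[
Kf(K_n-F)\ge n-1+\frac{2p}{n-2}.
\]

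For the equality case, I will note that equality holds exactly when the correction sum vanishes. Since it is a sum of nonnegative terms with positive denominators, this happens if and only if $\theta_\alpha=2$ for every $\alpha$. Because $Q$ is real symmetric, it is orthogonally diagonalizable as $Q=U\operatorname{diag}(\theta_1,\ldots,\theta_p)U^T$. Thus all eigenvalues being $2$ is equivalent to $Q=2I_p$. Alternatively, one can use the trace form: $\operatorname{tr}\bigl((Q-2I_p)^2(nI_p-Q)^{-1}\bigr)=0$ forces $Q-2I_p=O$, since $(nI_p-Q)^{-1}$ is positive definite.

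Finally, Proposition \ref{prop:matching-characterization} converts $Q=2I_p$ into $H_F\cong pK_2$, and conversely. In that case every $\theta_\alpha=2$, the correction term is zero, and equality holds. No step is a genuine obstacle. The only point needing care is the strict positivity of the denominators $n-\theta_\alpha$, which is exactly what Lemma \ref{lem:positive-defect-resolvent} supplies under the connectivity hypothesis.
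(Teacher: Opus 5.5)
Your proposal is correct and follows the paper's proof essentially verbatim. You show the correction term in Theorem~\ref{thm:exact-stability} is nonnegative via Lemma~\ref{lem:positive-defect-resolvent}, note that equality holds iff every $\theta_\alpha=2$, hence iff $Q=2I_p$ by orthogonal diagonalizability, and finish with Proposition~\ref{prop:matching-characterization}; your optional trace-form variant is also valid, since $(Q-2I_p)^2$ and $(nI_p-Q)^{-1}$ commute and the latter is positive definite.
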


\begin{proof}
By Lemma \ref{lem:positive-defect-resolvent}, we have
\[
n-\theta_\alpha>0
\qquad
(1\le\alpha\le p).
\]
Hence the correction term in Theorem \ref{thm:exact-stability} is
nonnegative.

Equality holds if and only if
\[
\theta_\alpha=2
\qquad
(1\le\alpha\le p).
\]
Since $Q$ is real symmetric, this is equivalent to
\[
Q=2I_p.
\]
By Proposition \ref{prop:matching-characterization}, this is equivalent to
\[
H_F\cong pK_2.
\]
\end{proof}

Next, we relate local interactions among the deleted edges to the spread
of the eigenvalues of the edge-defect matrix.
Put
\[
a(F)
=
\sum_{v\in V(H_F)}
\binom{d_{H_F}(v)}{2}.
\]
This is the number of unordered pairs of deleted edges which share a vertex.

%%%%%%%%%%%%%%%%%%%%%%%%%%%%%%%%%%%%%%%%
\begin{lemma}
\label{lem:trace-A-square}
Put
\[
A=Q-2I_p.
\]
Then
\[
\operatorname{tr}A^2=2a(F).
\]
\end{lemma}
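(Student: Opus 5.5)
Since $A=Q-2I_p$ is real symmetric, we can compute $\operatorname{tr}A^2$ directly from its entries:
\[
\operatorname{tr}A^2=\sum_{\alpha,\beta}A_{\alpha\beta}A_{\beta\alpha}=\sum_{\alpha,\beta}A_{\alpha\beta}^2 .
\]
By Lemma \ref{lem:basic-Q} (1), every diagonal entry $Q_{\alpha\alpha}$ equals $2$, so the diagonal of $A$ vanishes. Only off-diagonal entries contribute, and these coincide with those of $Q$.

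For $\alpha\ne\beta$, Lemma \ref{lem:basic-Q} (2) gives $A_{\alpha\beta}^2=1$ when $e_\alpha$ and $e_\beta$ share exactly one vertex, and $A_{\alpha\beta}^2=0$ when they are disjoint. Since $F$ is a set of distinct edges of a simple graph, two distinct deleted edges cannot share both end vertices, so these are the only cases. Hence
\[
\operatorname{tr}A^2=\#\{(\alpha,\beta):\alpha\ne\beta,\ |e_\alpha\cap e_\beta|=1\}.
\]
This counts ordered pairs, so it is twice the number of unordered pairs of deleted edges meeting at a vertex.

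It remains to show that the number of such unordered pairs equals $a(F)$. Each unordered pair of distinct deleted edges sharing a vertex shares exactly one vertex $v\in V(H_F)$, and both edges are incident to $v$ in $H_F$. Conversely, each choice of a vertex $v$ together with two distinct edges of $H_F$ incident to $v$ gives such a pair. Again because two distinct edges share at most one vertex, this correspondence is a bijection. Counting by the shared vertex gives $\sum_{v}\binom{d_{H_F}(v)}{2}=a(F)$, and therefore $\operatorname{tr}A^2=2a(F)$.

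The only point needing care is the use of simplicity: it guarantees $|e_\alpha\cap e_\beta|\le1$ for $\alpha\ne\beta$, which makes Lemma \ref{lem:basic-Q} (2) exhaustive and the vertex-based count bijective. Everything else is direct bookkeeping, so I expect no real obstacle.
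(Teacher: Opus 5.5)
Your proof is correct and follows essentially the same route as the paper: write $\operatorname{tr}A^2=\sum_{\alpha,\beta}A_{\alpha\beta}^2$, use the zero diagonal and the $\pm1$ off-diagonal entries for edges sharing a vertex, and observe that each unordered such pair is counted twice. You also spell out why $a(F)$ counts these pairs (distinct edges of a simple graph share at most one vertex), which the paper simply asserts when defining $a(F)$.
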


\begin{proof}
By Lemma \ref{lem:basic-Q} (1), (2), the matrix $A=Q-2I_p$ is real
symmetric, its diagonal entries are all zero, and its off-diagonal entries
are $0$ or $\pm1$.
Moreover, for $\alpha\ne\beta$,
\[
A_{\alpha\beta}\ne0
\]
if and only if the deleted edges $e_\alpha$ and $e_\beta$ share a vertex.

Since $A$ is symmetric,
\[
\begin{aligned}
\operatorname{tr}A^2
&=
\sum_{\alpha=1}^{p}(A^2)_{\alpha\alpha}\\
&=
\sum_{\alpha,\beta=1}^{p}
A_{\alpha\beta}A_{\beta\alpha}\\
&=
\sum_{\alpha,\beta=1}^{p}
A_{\alpha\beta}^2.
\end{aligned}
\]
For each unordered pair $\{e_\alpha,e_\beta\}$ of deleted edges sharing
a vertex, the two terms
\[
A_{\alpha\beta}^2
=
A_{\beta\alpha}^2
=
1
\]
appear in the last sum.
Therefore,
\[
\operatorname{tr}A^2=2a(F).
\]
\end{proof}

In the next section, we use this identity and majorization to determine
the minimum among non-matching deleted edge sets and its equality condition.

%%%%%%%%%%%%%%%%%%%%%%%%%%%%%%%%%%%%%%%%
\section{Majorization and the first stability gap}
\label{sec:first-gap}
%%%%%%%%%%%%%%%%%%%%%%%%%%%%%%%%%%%%%%%%

In this section, we use majorization to determine the structure which
minimizes the Kirchhoff index among non-matching deletion graphs.
We first show that a majorization relation for the eigenvalue vectors of
edge-defect matrices gives an order relation for Kirchhoff indices.

%%%%%%%%%%%%%%%%%%%%%%%%%%%%%%%%%%%%%%%%
\begin{theorem}
\label{thm:majorization-principle}
Let $F_1,F_2\subseteq E(K_n)$ be sets of $p$ edges, and assume that
$K_n-F_1$ and $K_n-F_2$ are connected.
For $i=1,2$, let
\[
\theta(F_i)
=
\bigl(
\theta_1(F_i),\ldots,\theta_p(F_i)
\bigr)
\]
be the eigenvalue vector of the edge-defect matrix of $F_i$, arranged in
decreasing order.
If
\[
\theta(F_1)\succ\theta(F_2),
\]
then
\[
Kf(K_n-F_1)\ge Kf(K_n-F_2).
\]
Moreover, if the two eigenvalue vectors are not the same up to permutation,
then the inequality is strict.
\end{theorem}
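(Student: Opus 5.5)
The plan is to combine the spectral formula of Theorem \ref{thm:kirchhoff-spanning-defect} with Karamata's inequality (Theorem \ref{thm:karamata}). By Theorem \ref{thm:kirchhoff-spanning-defect}, for $i=1,2$ we have
\[
Kf(K_n-F_i)=n-1+\sum_{\alpha=1}^{p}f\bigl(\theta_\alpha(F_i)\bigr),
\qquad
f(t)=\frac{t}{n-t}.
\]
The comparison of Kirchhoff indices therefore reduces to comparing $\sum_\alpha f(\theta_\alpha(F_1))$ with $\sum_\alpha f(\theta_\alpha(F_2))$. This is exactly the setting of Theorem \ref{thm:karamata}.

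First, choose the interval $I=[0,n)$. By Lemma \ref{lem:positive-defect-resolvent}, connectivity of $K_n-F_i$ gives $0\le\theta_\alpha(F_i)<n$ for all $\alpha$, so both eigenvalue vectors lie in $I^p$. Note that the trace condition in the definition of $\succ$ holds automatically, since both sums equal $\operatorname{tr}Q=2p$ by Lemma \ref{lem:basic-Q} (1). The majorization hypothesis is what supplies the partial-sum inequalities.

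Second, check strict convexity of $f$ on $I$. Write
\[
f(t)=\frac{n}{n-t}-1,
\]
so that
\[
f''(t)=\frac{2n}{(n-t)^3}>0
\qquad
\text{for } t<n.
\]
Hence $f$ is strictly convex on $I$.

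Karamata's inequality then yields
\[
\sum_\alpha f\bigl(\theta_\alpha(F_1)\bigr)\ge\sum_\alpha f\bigl(\theta_\alpha(F_2)\bigr),
\]
and this inequality is strict when the two vectors are not permutations of each other. Adding $n-1$ to both sides gives the claim.

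There is no real obstacle. The only point needing care is making sure both vectors lie in a common interval on which $f$ is convex. The function $f$ blows up at $t=n$ and is concave for $t>n$, so the bound $\theta<n$ from Lemma \ref{lem:positive-defect-resolvent}, which uses connectivity, is essential.
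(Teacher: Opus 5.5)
Your proposal is correct and follows essentially the same route as the paper. Both arguments write $Kf(K_n-F_i)=n-1+\sum_\alpha f(\theta_\alpha(F_i))$ with $f(t)=t/(n-t)$ via Theorem \ref{thm:kirchhoff-spanning-defect}, use $f''(t)=2n/(n-t)^3>0$ for strict convexity on $[0,n)$, place both spectra in $[0,n)$ via Lemma \ref{lem:positive-defect-resolvent}, and conclude with Karamata's inequality, including its strict form.
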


\begin{proof}
Let $Q_i$ be the edge-defect matrix corresponding to $F_i$, and write
\[
\theta_1(F_i)\ge\cdots\ge\theta_p(F_i)
\]
for its eigenvalues.

Consider the function
\[
f(t)=\frac{t}{n-t}.
\]
It is twice differentiable on $t<n$, and
\[
f'(t)
=
\frac{n}{(n-t)^2},
\qquad
f''(t)
=
\frac{2n}{(n-t)^3}.
\]
Hence $f$ is strictly convex on $[0,n)$.

Since each $Q_i$ is real symmetric and positive semidefinite, its
eigenvalues are nonnegative. Also, since $K_n-F_i$ is connected,
Lemma \ref{lem:positive-defect-resolvent} gives
\[
\theta_\alpha(F_i)<n
\qquad
(1\le\alpha\le p,\ i=1,2).
\]
Thus all components of the two eigenvalue vectors belong to $[0,n)$.

The assumption
\[
\theta(F_1)\succ\theta(F_2)
\]
allows us to apply Karamata's inequality, Theorem \ref{thm:karamata}, to
the strictly convex function $f$. Therefore,
\[
\sum_{\alpha=1}^{p}
f\bigl(\theta_\alpha(F_1)\bigr)
\ge
\sum_{\alpha=1}^{p}
f\bigl(\theta_\alpha(F_2)\bigr).
\]
That is,
\[
\sum_{\alpha=1}^{p}
\frac{\theta_\alpha(F_1)}
     {n-\theta_\alpha(F_1)}
\ge
\sum_{\alpha=1}^{p}
\frac{\theta_\alpha(F_2)}
     {n-\theta_\alpha(F_2)}.
\]
By Theorem \ref{thm:kirchhoff-spanning-defect},
\[
Kf(K_n-F_i)
=
n-1+
\sum_{\alpha=1}^{p}
\frac{\theta_\alpha(F_i)}
     {n-\theta_\alpha(F_i)}
\qquad
(i=1,2).
\]
Hence
\[
Kf(K_n-F_1)\ge Kf(K_n-F_2).
\]

Since $f$ is strictly convex, the equality condition in Karamata's
inequality shows that the inequality is strict unless the two eigenvalue
vectors are the same up to permutation.
This proves the theorem.
\end{proof}

Next, we show that every non-matching deleted edge set has an eigenvalue
vector which majorizes a common vector.

%%%%%%%%%%%%%%%%%%%%%%%%%%%%%%%%%%%%%%%%
\begin{lemma}
\label{lem:nonmatching-majorization}
Let $p\ge2$, and suppose that $F$ is not a matching.
Let the eigenvalues of the edge-defect matrix $Q$ be arranged as
\[
\theta_1\ge\cdots\ge\theta_p.
\]
Then
\[
(\theta_1,\ldots,\theta_p)
\succ
\left(
3,
\underbrace{2,\ldots,2}_{p-2},
1
\right).
\]
\end{lemma}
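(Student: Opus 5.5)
Since both vectors have total sum $2p$ (by Lemma \ref{lem:basic-Q} (1), $\operatorname{tr}Q=2p$), it suffices to check the partial sum inequalities
\[
\sum_{i=1}^{k}\theta_i\ \ge\ 3+2(k-1)=2k+1\qquad(1\le k\le p-1).
\]
The plan is to get these from Cauchy's interlacing theorem (Theorem \ref{thm:cauchy-interlacing}), applied to a well-chosen principal submatrix of $Q$. Because $F$ is not a matching, there are two deleted edges $e_\alpha,e_\beta$ sharing exactly one vertex. By Lemma \ref{lem:basic-Q} (2), the corresponding $2\times 2$ principal submatrix is
\[
\begin{bmatrix}2&\pm1\\ \pm1&2\end{bmatrix},
\]
which has eigenvalues $3$ and $1$. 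Completing these two indices by the other $k-1$ indices will give a $(k+1)\times(k+1)$ principal submatrix to work with.

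Rather than interlacing eigenvalue by eigenvalue, I would use the Ky Fan maximum principle. It follows from interlacing, or equivalently from the variational characterization: $\sum_{i=1}^k\theta_i=\max\operatorname{tr}(U^TQU)$ over $p\times k$ matrices $U$ with orthonormal columns. Fix $k$ with $1\le k\le p-1$ and choose orthonormal vectors as follows.
\begin{enumerate}
\item[(1)] Take $u=(\epsilon_\alpha\pm\epsilon_\beta)/\sqrt2$, with the sign chosen so that $u^TQu=3$. Here $\epsilon_\gamma$ denotes the standard basis vectors of $\mathbb{R}^p$.
\item[(2)] Take $k-1$ further standard basis vectors $\epsilon_\gamma$ with $\gamma\notin\{\alpha,\beta\}$. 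These exist because $k-1\le p-2$.
\end{enumerate}
These $k$ vectors are orthonormal. Since $Q_{\gamma\gamma}=2$, the trace of the compressed matrix is $3+2(k-1)=2k+1$, which gives $\sum_{i=1}^k\theta_i\ge 2k+1$.

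If one prefers to stay strictly with the cited interlacing theorem, the same result follows by applying Theorem \ref{thm:cauchy-interlacing} to the $(k+1)\times(k+1)$ principal submatrix $M$ on the indices $\{\alpha,\beta,\gamma_1,\dots,\gamma_{k-1}\}$. Its $k$ largest eigenvalues $\mu_1\ge\cdots\ge\mu_k$ satisfy $\theta_i\ge\mu_i$. Also $\sum_{i\le k}\mu_i=\operatorname{tr}M-\mu_{k+1}=2(k+1)-\mu_{k+1}$. This route requires showing $\mu_{k+1}\le1$, i.e.\ $\lambda_{\min}(M)\le u^TMu$ for the vector $u=(\epsilon_\alpha\mp\epsilon_\beta)/\sqrt2$ with Rayleigh quotient $1$. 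That is immediate by the Rayleigh principle. Then $\sum_{i\le k}\theta_i\ge 2k+1$ again.

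The only delicate point, and the main obstacle, is justifying the Ky Fan/Rayleigh step from what the paper states. I would present the second route, since it uses only Theorem \ref{thm:cauchy-interlacing} together with the elementary fact that the smallest eigenvalue of a symmetric matrix is at most any Rayleigh quotient. The case $k=p-1$ needs no special treatment, and the equality of total sums closes the majorization.
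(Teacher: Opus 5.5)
Your proof is correct, including the count $k-1\le p-2$ and the case $p=2$, but it takes a different route from the paper.

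\textbf{The paper's route.} It passes to $A=Q-2I_p$ and applies Cauchy interlacing only to the $2\times2$ block on the two adjacent deleted edges. This gives just the two extreme bounds $\delta_1\ge1$ and $\delta_p\le-1$. Every partial-sum inequality then follows from $\operatorname{tr}A=0$ by a sign split on $\delta_k$:
\begin{itemize}
\item if $\delta_k\ge0$, then $\sum_{i\le k}\delta_i\ge\delta_1\ge1$;
\item if $\delta_k<0$, then $\sum_{i>k}\delta_i\le\delta_p\le-1$.
\end{itemize}
In effect, the partial sums of a decreasing zero-sum vector form a concave sequence vanishing at $0$ and $p$. They are therefore bounded below by their values at $k=1$ and $k=p-1$.

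\textbf{Your route.} You bound each partial sum directly. In the first version, this is the trace of the compression of $Q$ onto an explicit $k$-dimensional test subspace (Ky Fan). In the second, it is interlacing on a $(k+1)\times(k+1)$ principal block together with the Rayleigh bound $\mu_{k+1}\le1$.

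\textbf{What each buys.} The paper's argument is more economical: it needs only two Rayleigh quotients and an elementary fact about real sequences. Yours is more flexible, because the Ky Fan bound adds up Rayleigh quotients of any orthonormal test vectors. It would immediately give stronger partial-sum bounds. For example, $\theta_1+\theta_2\ge6$ holds whenever there are two pairs of adjacent deleted edges with disjoint index sets. That is the kind of refinement the further rankings proposed in the conclusion would require.

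\textbf{The step you flag as delicate.} It is not actually delicate. Complete your $p\times k$ matrix $U$ to an orthogonal matrix $W$. Then $U^TQU$ is a leading principal submatrix of $W^TQW$, which has the same eigenvalues as $Q$. So Ky Fan's inequality follows directly from Theorem \ref{thm:cauchy-interlacing} as stated, and your first version is as fully justified as your second.
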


\begin{proof}
Put
\[
A=Q-2I_p,
\]
and let its eigenvalues be
\[
\delta_1\ge\cdots\ge\delta_p.
\]
Since $Q=A+2I_p$, we have
\[
\theta_i=2+\delta_i
\qquad
(1\le i\le p).
\]
Also, by Lemma \ref{lem:basic-Q} (1),
\[
\operatorname{tr}Q=2p.
\]
Hence
\[
\sum_{i=1}^{p}\delta_i
=
\operatorname{tr}A
=
\operatorname{tr}Q-2p
=
0.
\]

Since $F$ is not a matching, there exist two deleted edges which share a
vertex. Let them be $e_\alpha$ and $e_\beta$.
By Lemma \ref{lem:basic-Q} (2), the corresponding $2\times2$ principal
submatrix of $A$ is
\[
\begin{bmatrix}
0&\varepsilon\\
\varepsilon&0
\end{bmatrix},
\qquad
\varepsilon\in\{1,-1\}.
\]
Its eigenvalues are
\[
1,\ -1.
\]
Therefore, by Cauchy's interlacing theorem,
\[
\delta_1\ge1,
\qquad
\delta_p\le-1.
\]

We now show that
\[
\sum_{i=1}^{k}\delta_i\ge1
\qquad
(1\le k\le p-1).
\]
Fix $k$. If
\[
\delta_k\ge0,
\]
then
\[
\delta_1,\ldots,\delta_k\ge0,
\]
and so
\[
\sum_{i=1}^{k}\delta_i
\ge
\delta_1
\ge
1.
\]
If
\[
\delta_k<0,
\]
then
\[
\delta_{k+1},\ldots,\delta_p<0.
\]
Since $\delta_p\le -1$, we have
\[
\sum_{i=k+1}^{p}\delta_i
\le
\delta_p
\le
-1.
\]
Using
\[
\sum_{i=1}^{p}\delta_i=0,
\]
we obtain
\[
\sum_{i=1}^{k}\delta_i
=
-\sum_{i=k+1}^{p}\delta_i
\ge
1.
\]

Thus, for every $1\le k\le p-1$,
\[
\sum_{i=1}^{k}\delta_i\ge1.
\]
Also,
\[
\sum_{i=1}^{p}\delta_i=0,
\]
which is the same as the sum of the components of
\[
\left(
1,
\underbrace{0,\ldots,0}_{p-2},
-1
\right).
\]
Hence, by the definition of majorization,
\[
(\delta_1,\ldots,\delta_p)
\succ
\left(
1,
\underbrace{0,\ldots,0}_{p-2},
-1
\right).
\]
Adding $2$ to each component preserves majorization. Therefore,
\[
(\theta_1,\ldots,\theta_p)
=
(\delta_1+2,\ldots,\delta_p+2)
\succ
\left(
3,
\underbrace{2,\ldots,2}_{p-2},
1
\right).
\]
\end{proof}

We now determine the minimum among non-matching deleted edge sets by using
this lemma and Theorem \ref{thm:majorization-principle}.

%%%%%%%%%%%%%%%%%%%%%%%%%%%%%%%%%%%%%%%%
\begin{theorem}
\label{thm:first-gap}
Let $p\ge2$ and
\[
n\ge\max\{4,2p-1\}.
\]
Let $F\subseteq E(K_n)$ be a set of $p$ edges, and assume that
$K_n-F$ is connected.

Let $F_0\subseteq E(K_n)$ be a deleted edge set such that
\[
H_{F_0}\cong P_3\cup(p-2)K_2.
\]
If $H_F$ is not a matching, then
\[
Kf(K_n-F)
\ge
Kf(K_n-F_0).
\]
More explicitly,
\[
Kf(K_n-F)
\ge
n-1
+
\frac{3}{n-3}
+
\frac{1}{n-1}
+
\frac{2(p-2)}{n-2}.
\]
Equality holds if and only if
\[
H_F\cong P_3\cup(p-2)K_2.
\]
\end{theorem}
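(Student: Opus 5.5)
The plan is to combine Lemma \ref{lem:nonmatching-majorization} with Theorem \ref{thm:majorization-principle}. The comparison vector $(3,2,\ldots,2,1)$ must be realized as the spectrum of an actual deletion graph, and that graph is $P_3\cup(p-2)K_2$.

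First I will compute the spectrum of $Q_{F_0}$ via Lemma \ref{lem:deletion-spectrum}. Here $H_{F_0}$ has $q=3+2(p-2)=2p-1$ vertices, $p$ edges and $c=p-1$ components, so $Q$ has $q-c=p$ positive eigenvalues and no zeros. The Laplacian spectrum of $P_3$ is $0,1,3$, and that of $K_2$ is $0,2$. Hence the eigenvalues of $Q_{F_0}$ are exactly $3,\underbrace{2,\ldots,2}_{p-2},1$.

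Next I will check that everything is well defined. Since $n\ge 2p-1=q$, the graph $F_0$ can be realized in $K_n$. All eigenvalues are at most $3<n$ because $n\ge4$, so $K_n-F_0$ is connected; for instance it has diameter at most $2$ when $n\ge4$, or one can reverse the argument of Lemma \ref{lem:positive-defect-resolvent}. By Theorem \ref{thm:kirchhoff-spanning-defect},
\[
Kf(K_n-F_0)=n-1+\frac{3}{n-3}+\frac{1}{n-1}+\frac{2(p-2)}{n-2}.
\]

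For the inequality, take $F$ non-matching with $K_n-F$ connected. Lemma \ref{lem:nonmatching-majorization} gives $\theta(F)\succ\theta(F_0)$, and Theorem \ref{thm:majorization-principle} then yields $Kf(K_n-F)\ge Kf(K_n-F_0)$, with strict inequality unless $\theta(F)$ equals $(3,2,\ldots,2,1)$ up to order. The "if" direction of the equality case is immediate because isomorphic deletion graphs give the same spectrum.

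The main obstacle is the "only if" direction of the equality case: I must show that spectrum $(3,2^{p-2},1)$ forces $H_F\cong P_3\cup(p-2)K_2$. I will use Lemma \ref{lem:trace-A-square}. With $A=Q-2I_p$ we get $\operatorname{tr}A^2=1+0+1=2$, so $a(F)=1$, meaning exactly one pair of deleted edges shares a vertex. Therefore exactly one vertex of $H_F$ has degree $2$ and all others have degree $1$. The two edges at that vertex are not part of a triangle, since a triangle would create more adjacent pairs. So $H_F$ consists of one $P_3$ together with $p-2$ disjoint edges, each of which is disjoint from everything else. This gives $H_F\cong P_3\cup(p-2)K_2$.
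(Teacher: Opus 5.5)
Your proposal is correct and follows essentially the same route as the paper: realize $F_0$ and check that $K_n-F_0$ is connected, read off the spectrum $(3,2,\ldots,2,1)$ from Lemma \ref{lem:deletion-spectrum}, combine Lemma \ref{lem:nonmatching-majorization} with Theorem \ref{thm:majorization-principle}, and settle the equality case through strict convexity and $\operatorname{tr}A^2=2a(F)$, which forces $a(F)=1$. Your diameter-$2$ connectivity check is the paper's $x-z-y$ path argument, and your explicit count $q-c=p$ (so $Q_{F_0}$ has no zero eigenvalues) and your degree argument excluding a triangle make explicit two small points the paper leaves implicit.
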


\begin{proof}
First, we check that the reference deleted edge set $F_0$ can be realized.
The graph
\[
P_3\cup(p-2)K_2
\]
has
\[
3+2(p-2)=2p-1
\]
vertices. Hence, by the assumption
\[
n\ge2p-1,
\]
it can be realized as a subgraph of $K_n$.

Also, since $n\ge4$, the graph $K_n-F_0$ is connected.
Indeed, let $x,y$ be two distinct vertices.
If $\{x,y\}\notin F_0$, then $x$ and $y$ are adjacent in $K_n-F_0$.
If $\{x,y\}\in F_0$, then we can choose a vertex $z$ which is not joined
to either $x$ or $y$ by a deleted edge. Then
\[
xz,\ yz\in E(K_n-F_0),
\]
and so
\[
x-z-y
\]
is a path in $K_n-F_0$.

Next, we compute the eigenvalues of the edge-defect matrix of $F_0$.
The Laplacian eigenvalues of $P_3$ are
\[
0,\ 1,\ 3,
\]
and those of $K_2$ are
\[
0,\ 2.
\]
Since the Laplacian matrix of a disjoint union is the direct sum of the
Laplacian matrices of its connected components, the positive Laplacian
eigenvalues of $H_{F_0}$ are
\[
3,
\underbrace{2,\ldots,2}_{p-2},
1.
\]
By Lemma \ref{lem:deletion-spectrum}, the eigenvalues of the edge-defect
matrix of $F_0$ are also
\[
3,
\underbrace{2,\ldots,2}_{p-2},
1.
\]

On the other hand, since $H_F$ is not a matching, Lemma
\ref{lem:nonmatching-majorization} shows that the eigenvalue vector of the
edge-defect matrix of $F$ majorizes
\[
\left(
3,
\underbrace{2,\ldots,2}_{p-2},
1
\right).
\]
Therefore, by Theorem \ref{thm:majorization-principle},
\[
Kf(K_n-F)
\ge
Kf(K_n-F_0).
\]

By Theorem \ref{thm:kirchhoff-spanning-defect}, we have
\[
\begin{aligned}
Kf(K_n-F_0)
&=
n-1
+
\frac{3}{n-3}
+
\sum_{\alpha=1}^{p-2}\frac{2}{n-2}
+
\frac{1}{n-1}\\
&=
n-1
+
\frac{3}{n-3}
+
\frac{1}{n-1}
+
\frac{2(p-2)}{n-2}.
\end{aligned}
\]
This gives the desired lower bound.

It remains to discuss the equality condition.
Suppose that
\[
Kf(K_n-F)
=
Kf(K_n-F_0).
\]
Since Theorem \ref{thm:majorization-principle} is based on the strictly
convex function
\[
f(t)=\frac{t}{n-t},
\]
the equality condition implies that the eigenvalue vectors of $F$ and
$F_0$ are the same up to permutation.
As the eigenvalues are arranged in decreasing order, the eigenvalues of
$Q$ are
\[
3,
\underbrace{2,\ldots,2}_{p-2},
1.
\]
Thus the eigenvalues of
\[
A=Q-2I_p
\]
are
\[
1,
\underbrace{0,\ldots,0}_{p-2},
-1.
\]
Hence
\[
\operatorname{tr}A^2=2.
\]
By Lemma \ref{lem:trace-A-square},
\[
\operatorname{tr}A^2=2a(F).
\]
Therefore,
\[
a(F)=1.
\]
This means that there is exactly one unordered pair of deleted edges which
share a vertex. These two edges form a copy of $P_3$, and the remaining
\(p-2$ deleted edges share no vertices with each other or with this
$P_3$. Hence
\[
H_F\cong P_3\cup(p-2)K_2.
\]

Conversely, suppose that
\[
H_F\cong P_3\cup(p-2)K_2.
\]
Then, by Lemma \ref{lem:deletion-spectrum}, the eigenvalues of the
edge-defect matrix of $F$ are
\[
3,
\underbrace{2,\ldots,2}_{p-2},
1.
\]
These are the same as those for $F_0$.
Therefore, Theorem \ref{thm:kirchhoff-spanning-defect} gives
\[
Kf(K_n-F)=Kf(K_n-F_0).
\]
This proves the equality condition.
\end{proof}

Assume further that $n\ge2p$, and let $M\subseteq E(K_n)$ be a matching
with $p$ edges.
By the assumption $n\ge2p$, such a matching can be realized in $K_n$.
The difference between the minimum attained by a matching and the minimum
among non-matching deleted edge sets is as follows.

%%%%%%%%%%%%%%%%%%%%%%%%%%%%%%%%%%%%%%%%
\begin{corollary}
\label{cor:first-gap-size}
\[
Kf(K_n-F_0)-Kf(K_n-M)
=
\frac{2n}{(n-1)(n-2)(n-3)}.
\]
\end{corollary}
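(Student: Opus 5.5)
The plan is to compute both Kirchhoff indices explicitly from earlier results and subtract. For the matching $M$, Proposition \ref{prop:matching-characterization} gives $Q_M=2I_p$, so every eigenvalue is $2$. Before using Theorem \ref{thm:kirchhoff-spanning-defect} I need $K_n-M$ to be connected. For adjacent $x,y$ in $M$, the path $x$--$z$--$y$ through any other vertex $z$ works when $n\ge 4$; alternatively, Lemma \ref{lem:positive-defect-resolvent} is not needed, since one can check directly that $n-2>0$. Theorem \ref{thm:kirchhoff-spanning-defect} then yields
\[
Kf(K_n-M)=n-1+\frac{2p}{n-2}.
\]
This is also the equality case of Corollary \ref{cor:xu-das-zhang-recovery}.

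For $F_0$, I will reuse the value obtained in the proof of Theorem \ref{thm:first-gap}, namely
\[
Kf(K_n-F_0)=n-1+\frac{3}{n-3}+\frac{1}{n-1}+\frac{2(p-2)}{n-2}.
\]
Subtracting, the terms $n-1$ cancel and $\frac{2(p-2)}{n-2}-\frac{2p}{n-2}=-\frac{4}{n-2}$. So the difference is
\[
\frac{3}{n-3}+\frac{1}{n-1}-\frac{4}{n-2}.
\]

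To simplify this, I will pair the terms as $\bigl(\frac{3}{n-3}-\frac{3}{n-2}\bigr)+\bigl(\frac{1}{n-1}-\frac{1}{n-2}\bigr)=\frac{3}{(n-2)(n-3)}-\frac{1}{(n-1)(n-2)}$. Over the common denominator $(n-1)(n-2)(n-3)$ the numerator is $3(n-1)-(n-3)=2n$, which gives the claim.

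As a cross-check, I can use Theorem \ref{thm:exact-stability}. Its correction term for $F_0$ is $\frac{n}{(n-2)^2}\bigl(\frac{1}{n-3}+\frac{1}{n-1}\bigr)=\frac{n}{(n-2)^2}\cdot\frac{2(n-2)}{(n-1)(n-3)}$, which is the same value, while the matching contributes no correction. There is no real obstacle here. The only points needing care are the standing hypotheses $n\ge\max\{4,2p\}$, which ensure both graphs are realizable and connected, and the small algebraic simplification.
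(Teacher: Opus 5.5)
Your proposal is correct and matches the paper's proof: you get $Kf(K_n-M)=n-1+\frac{2p}{n-2}$ from $Q_M=2I_p$ (the equality case of Corollary~\ref{cor:xu-das-zhang-recovery}), take $Kf(K_n-F_0)$ from Theorem~\ref{thm:first-gap}, and simplify $\frac{3}{n-3}+\frac{1}{n-1}-\frac{4}{n-2}$. Your pairing of terms is only a tidier route to the same numerator $2n$. Your cross-check via Theorem~\ref{thm:exact-stability} is a nice addition that the paper does not make: it reads the gap off directly as the correction term of $F_0$, since the matching contributes no correction.
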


\begin{proof}
Since $M$ is a matching with $p$ edges, Corollary
\ref{cor:xu-das-zhang-recovery} gives
\[
Kf(K_n-M)
=
n-1+\frac{2p}{n-2}.
\]
On the other hand, by Theorem \ref{thm:first-gap},
\[
Kf(K_n-F_0)
=
n-1
+
\frac{3}{n-3}
+
\frac{1}{n-1}
+
\frac{2(p-2)}{n-2}.
\]
Therefore,
\[
\begin{aligned}
Kf(K_n-F_0)-Kf(K_n-M)
&=
\frac{3}{n-3}
+
\frac{1}{n-1}
+
\frac{2(p-2)}{n-2}
-
\frac{2p}{n-2}\\
&=
\frac{1}{n-1}
+
\frac{3}{n-3}
-
\frac{4}{n-2}.
\end{aligned}
\]
Taking a common denominator, we get
\[
\begin{aligned}
&\frac{1}{n-1}
+
\frac{3}{n-3}
-
\frac{4}{n-2}
\\
&=
\frac{
(n-2)(n-3)
+
3(n-1)(n-2)
-
4(n-1)(n-3)
}{
(n-1)(n-2)(n-3)
}\\
&=
\frac{2n}{(n-1)(n-2)(n-3)}.
\end{aligned}
\]
Hence
\[
Kf(K_n-F_0)-Kf(K_n-M)
=
\frac{2n}{(n-1)(n-2)(n-3)}.
\]
\end{proof}

%%%%%%%%%%%%%%%%%%%%%%%%%%%%%%%%%%%%%%%%
\begin{remark}
\label{rem:first-gap-palacios}
When $p=2$, we have
\[
H_{F_0}\cong P_3,
\]
which is consistent with the result of Palacios \cite{Palacios2015}.
When $p=3$, the graph
\[
H_{F_0}\cong P_3\cup K_2
\]
also appears in the known ranking in Xu, Das and Zhang
\cite{XuDasZhang2016}.
Theorem \ref{thm:first-gap} extends this structure to arbitrary $p$,
and gives the sharp minimum and the unique equality condition among
non-matching deleted edge sets.
\end{remark}

%%%%%%%%%%%%%%%%%%%%%%%%%%%%%%%%%%%%%%%%
\section{Examples}
\label{sec:examples}
%%%%%%%%%%%%%%%%%%%%%%%%%%%%%%%%%%%%%%%%

In this section, we apply the formulas obtained above to some typical
deletion graphs.
Throughout this section, we assume that
\[
G=K_n-F
\]
is connected.

By Lemma \ref{lem:deletion-spectrum}, if
\[
\lambda_1,\ldots,\lambda_r
\]
are the positive Laplacian eigenvalues of $H_F$, counted with
multiplicities, then the nonzero eigenvalues of the edge-defect matrix
$Q$ are also
\[
\lambda_1,\ldots,\lambda_r.
\]
Hence, by Theorems \ref{thm:kirchhoff-spanning-defect} and
\ref{thm:spanning-tree-defect}, we have
\[
Kf(K_n-F)
=
n-1+
\sum_{j=1}^{r}
\frac{\lambda_j}{n-\lambda_j}
\]
and
\[
\tau(K_n-F)
=
n^{n-2}
\prod_{j=1}^{r}
\left(
1-\frac{\lambda_j}{n}
\right).
\]

We first give formulas for some basic deletion graphs.

%%%%%%%%%%%%%%%%%%%%%%%%%%%%%%%%%%%%%%%%
\begin{proposition}
\label{prop:matching-defect}
Suppose that $H_F\cong mK_2$.
Then
\[
Kf(K_n-F)
=
n-1+\frac{2m}{n-2}
\]
and
\[
\tau(K_n-F)
=
n^{n-m-2}(n-2)^m.
\]
\end{proposition}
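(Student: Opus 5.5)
The plan is to read off the spectrum of $Q$ from Lemma \ref{lem:deletion-spectrum} and then substitute it into Theorems \ref{thm:kirchhoff-spanning-defect} and \ref{thm:spanning-tree-defect}.

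\textbf{Spectrum of $Q$.} When $H_F\cong mK_2$, the set $F$ consists of $p=m$ pairwise disjoint edges, so $F$ is a matching. Proposition \ref{prop:matching-characterization} then gives $Q=2I_m$, and all eigenvalues satisfy $\theta_\alpha=2$. As a cross-check, $H_F$ has $q=2m$ vertices and $c=m$ components. Each $K_2$ contributes the Laplacian eigenvalues $0,2$. So the positive Laplacian eigenvalues of $H_F$ are $2$ with multiplicity $m=q-c$, and Lemma \ref{lem:deletion-spectrum} gives the same spectrum for $Q$, with no zero eigenvalues since $p-q+c=0$.

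\textbf{Kirchhoff index.} By Theorem \ref{thm:kirchhoff-spanning-defect},
\[
Kf(K_n-F)=n-1+\sum_{\alpha=1}^{m}\frac{2}{n-2}=n-1+\frac{2m}{n-2}.
\]
Alternatively, this is the equality case of Corollary \ref{cor:xu-das-zhang-recovery}. Connectedness of $K_n-F$ is assumed throughout the section, and Lemma \ref{lem:positive-defect-resolvent} gives $2<n$, so the denominator is positive.

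\textbf{Spanning trees.} By Theorem \ref{thm:spanning-tree-defect},
\[
\tau(K_n-F)=n^{n-m-2}\det(nI_m-2I_m)=n^{n-m-2}(n-2)^m.
\]

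There is no real obstacle. The only point to verify is that $p=m$ and that $Q$ has no zero eigenvalues, and both facts follow from the matching characterization.
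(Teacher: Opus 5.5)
Your proof is correct and follows essentially the paper's route. The paper reads off the positive Laplacian eigenvalues of $mK_2$ (namely $2$ with multiplicity $m$) via Lemma~\ref{lem:deletion-spectrum} and substitutes them into Theorems~\ref{thm:kirchhoff-spanning-defect} and~\ref{thm:spanning-tree-defect}, which is your cross-check; your main step, taking $Q=2I_m$ from Proposition~\ref{prop:matching-characterization} and using the determinant form $n^{n-m-2}\det(nI_m-Q)$, is an equivalent and slightly more direct shortcut.
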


\begin{proof}
The Laplacian eigenvalues of $K_2$ are
\[
0,\ 2.
\]
Since $mK_2$ is the disjoint union of $m$ copies of $K_2$, the positive
Laplacian eigenvalues of $H_F$ are
\[
\underbrace{2,\ldots,2}_{m\text{ times}}.
\]
Therefore,
\[
\begin{aligned}
Kf(K_n-F)
&=
n-1+
\sum_{\alpha=1}^{m}
\frac{2}{n-2}\\
&=
n-1+\frac{2m}{n-2}.
\end{aligned}
\]
Also,
\[
\begin{aligned}
\tau(K_n-F)
&=
n^{n-2}
\prod_{\alpha=1}^{m}
\left(
1-\frac{2}{n}
\right)\\
&=
n^{n-2}
\left(
\frac{n-2}{n}
\right)^m\\
&=
n^{n-m-2}(n-2)^m.
\end{aligned}
\]
\end{proof}

%%%%%%%%%%%%%%%%%%%%%%%%%%%%%%%%%%%%%%%%
\begin{proposition}
\label{prop:first-nonmatching-defect}
Let $m\ge2$, and suppose that
\[
H_F\cong P_3\cup(m-2)K_2.
\]
Then
\[
Kf(K_n-F)
=
n-1
+
\frac{3}{n-3}
+
\frac{1}{n-1}
+
\frac{2(m-2)}{n-2}
\]
and
\[
\tau(K_n-F)
=
n^{n-m-2}
(n-3)(n-1)(n-2)^{m-2}.
\]
\end{proposition}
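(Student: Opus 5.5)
The plan is to follow the template of Proposition \ref{prop:matching-defect}: determine the positive Laplacian eigenvalues of the deletion graph, then substitute them into the two reduction formulas recalled at the start of this section. Here $p=m$, because $P_3$ contributes $2$ edges and $(m-2)K_2$ contributes $m-2$ edges.

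The first step is the spectrum of $H_F$. The Laplacian of a disjoint union is the direct sum of the Laplacians of its components. For $P_3$ the Laplacian is
\[
\begin{bmatrix}1&-1&0\\-1&2&-1\\0&-1&1\end{bmatrix},
\]
whose eigenvalues are $0,1,3$; this is checked from the trace $4$ together with the characteristic polynomial $t(t-1)(t-3)$. Each $K_2$ contributes the eigenvalues $0,2$. Hence the positive Laplacian eigenvalues of $H_F$ are
\[
3,\ \underbrace{2,\ldots,2}_{m-2},\ 1.
\]
By Lemma \ref{lem:deletion-spectrum}, these are exactly the eigenvalues of $Q$. There are no zero eigenvalues, since $q-c=(3+2(m-2))-(m-1)=m$. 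This also matches the computation already carried out for $F_0$ in the proof of Theorem \ref{thm:first-gap}.

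The second step is to substitute into Theorem \ref{thm:kirchhoff-spanning-defect}. This gives
\[
Kf(K_n-F)=n-1+\frac{3}{n-3}+\frac{1}{n-1}+\frac{2(m-2)}{n-2}.
\]
The third step is to substitute into Theorem \ref{thm:spanning-tree-defect}. This gives
\[
\tau(K_n-F)=n^{n-2}\cdot\frac{n-3}{n}\cdot\frac{n-1}{n}\cdot\left(\frac{n-2}{n}\right)^{m-2}.
\]
The powers of $n$ in the denominators total $n^{m}$, so the product simplifies to $n^{n-m-2}(n-3)(n-1)(n-2)^{m-2}$.

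There is no real obstacle in this argument. The only point that needs care is the eigenvalue count, namely making sure that $Q$ has exactly $m$ nonzero eigenvalues and hence no extra zeros. The formulas themselves rely on the standing connectivity assumption of this section, which guarantees $\theta<n$ by Lemma \ref{lem:positive-defect-resolvent}, so that every denominator is nonzero.
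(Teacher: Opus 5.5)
Your proposal is correct and follows essentially the same route as the paper: you read off the positive Laplacian spectrum $3,1,2,\ldots,2$ of $P_3\cup(m-2)K_2$ from the direct-sum structure, then substitute it into Theorems \ref{thm:kirchhoff-spanning-defect} and \ref{thm:spanning-tree-defect}. Your explicit check that $q-c=m$, so that $Q$ has no zero eigenvalues, is a small bookkeeping addition the paper leaves implicit; it is harmless either way, since zero eigenvalues would contribute nothing to the sum and a factor $1$ to the product.
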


\begin{proof}
The Laplacian eigenvalues of $P_3$ are
\[
0,\ 1,\ 3.
\]
Indeed, the Laplacian matrix of $P_3$ is
\[
L(P_3)
=
\begin{bmatrix}
1&-1&0\\
-1&2&-1\\
0&-1&1
\end{bmatrix},
\]
and its characteristic polynomial is
\[
\lambda(\lambda-1)(\lambda-3).
\]
Also, each copy of $K_2$ has Laplacian eigenvalues
\[
0,\ 2.
\]
Thus the positive Laplacian eigenvalues of $H_F$ are
\[
3,\ 1,\
\underbrace{2,\ldots,2}_{m-2\text{ times}}.
\]
Therefore,
\[
\begin{aligned}
Kf(K_n-F)
&=
n-1
+
\frac{3}{n-3}
+
\frac{1}{n-1}
+
\sum_{\alpha=1}^{m-2}
\frac{2}{n-2}\\
&=
n-1
+
\frac{3}{n-3}
+
\frac{1}{n-1}
+
\frac{2(m-2)}{n-2}.
\end{aligned}
\]
Also,
\[
\begin{aligned}
\tau(K_n-F)
&=
n^{n-2}
\left(
1-\frac{3}{n}
\right)
\left(
1-\frac{1}{n}
\right)
\left(
1-\frac{2}{n}
\right)^{m-2}\\
&=
n^{n-2}
\frac{n-3}{n}
\frac{n-1}{n}
\left(
\frac{n-2}{n}
\right)^{m-2}\\
&=
n^{n-m-2}
(n-3)(n-1)(n-2)^{m-2}.
\end{aligned}
\]
\end{proof}

%%%%%%%%%%%%%%%%%%%%%%%%%%%%%%%%%%%%%%%%
\begin{proposition}
\label{prop:star-defect}
Suppose that $H_F\cong K_{1,m}$ and
\[
n\ge m+2.
\]
Then
\[
Kf(K_n-F)
=
n-1+
\frac{m+1}{n-m-1}
+
\frac{m-1}{n-1}
\]
and
\[
\tau(K_n-F)
=
n^{n-m-2}
(n-m-1)(n-1)^{m-1}.
\]
\end{proposition}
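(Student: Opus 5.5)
We follow the same pattern as Propositions \ref{prop:matching-defect} and \ref{prop:first-nonmatching-defect}: determine the positive Laplacian eigenvalues of the deletion graph $H_F\cong K_{1,m}$, and substitute them into the general formulas
\[
Kf(K_n-F)=n-1+\sum_{j}\frac{\lambda_j}{n-\lambda_j},
\qquad
\tau(K_n-F)=n^{n-2}\prod_{j}\Bigl(1-\frac{\lambda_j}{n}\Bigr),
\]
which follow from Lemma \ref{lem:deletion-spectrum} together with Theorems \ref{thm:kirchhoff-spanning-defect} and \ref{thm:spanning-tree-defect}. Here $p=m$ and $q=m+1$. Since $H_F$ is connected, $c=1$, so $Q$ has exactly $q-c=m$ positive eigenvalues and no zero eigenvalues.

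The key step is the Laplacian spectrum of $K_{1,m}$: it is $0,\,1$ (with multiplicity $m-1$), and $m+1$. Write the center as vertex $0$ and the leaves as $1,\dots,m$.
\begin{itemize}
\item Any vector supported on the leaves with coordinate sum $0$ is annihilated by the row of the center, and on each leaf row it acts as the identity. This gives eigenvalue $1$ with an $(m-1)$-dimensional eigenspace.
\item The vector $(m,-1,\dots,-1)$ satisfies $L\,x=(m+1)x$; checking this is a direct computation.
\item Together with the eigenvector $\boldsymbol{1}$ for eigenvalue $0$, this accounts for all $m+1$ eigenvalues. As a check, the trace is $2m=(m-1)+(m+1)$.
\end{itemize}
Alternatively, one can verify directly that $Q=I_m+J_m$ for the orientation in which every edge points away from the center. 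Its eigenvalues are visibly $m+1$ (once) and $1$ (with multiplicity $m-1$).

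Before substituting, we confirm that $K_n-F$ is connected. The hypothesis $n\ge m+2$ guarantees a vertex outside the star, which is adjacent to the center in $K_n-F$. Consistently with Lemma \ref{lem:positive-defect-resolvent}, the largest eigenvalue satisfies $m+1<n$, so all denominators are positive.

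Substitution then gives
\[
Kf(K_n-F)=n-1+\frac{m+1}{n-m-1}+\frac{m-1}{n-1},
\]
and
\[
\tau(K_n-F)=n^{n-2}\cdot\frac{n-m-1}{n}\Bigl(\frac{n-1}{n}\Bigr)^{m-1}=n^{n-m-2}(n-m-1)(n-1)^{m-1}.
\]
No step is a genuine obstacle. The only point needing care is the eigenvalue bookkeeping, namely confirming that multiplicity $m-1$ for eigenvalue $1$ is correct and that no zero eigenvalues of $Q$ arise.
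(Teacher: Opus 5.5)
Your proposal is correct and follows essentially the same route as the paper. You obtain the Laplacian spectrum $0$, $1$ (multiplicity $m-1$), $m+1$ of $K_{1,m}$ from the same three eigenvector families, substitute it into the general Kirchhoff-index and spanning-tree formulas, and use $n\ge m+2$ both for connectivity and for $n-m-1>0$. Your side remark that $Q=I_m+J_m$ when every edge is oriented away from the center is also correct, and it gives the same spectrum directly without passing through $L(K_{1,m})$.
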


\begin{proof}
Let $v_0$ be the center of $K_{1,m}$, and let
$v_1,\ldots,v_m$ be its leaves.
If the center is listed first, then
\[
L(K_{1,m})
=
\begin{bmatrix}
m&-\boldsymbol{1}_m^T\\
-\boldsymbol{1}_m&I_m
\end{bmatrix}.
\]
It is clear that the all-one vector gives the eigenvalue $0$.

Next, consider the subspace
\[
W
=
\left\{
\begin{bmatrix}
0\\
x
\end{bmatrix}
:
x\in\mathbb{R}^m,\
\boldsymbol{1}_m^Tx=0
\right\}.
\]
For every vector in $W$, we have
\[
L(K_{1,m})
\begin{bmatrix}
0\\
x
\end{bmatrix}
=
\begin{bmatrix}
0\\
x
\end{bmatrix}.
\]
Thus $1$ is an eigenvalue with multiplicity $m-1$.
Moreover, for
\[
z
=
\begin{bmatrix}
m\\
-\boldsymbol{1}_m
\end{bmatrix},
\]
we have
\[
L(K_{1,m})z=(m+1)z.
\]
Hence the Laplacian eigenvalues of $K_{1,m}$ are
\[
0,\
\underbrace{1,\ldots,1}_{m-1\text{ times}},
\ m+1.
\]

Therefore,
\[
\begin{aligned}
Kf(K_n-F)
&=
n-1
+
\frac{m+1}{n-m-1}
+
\sum_{\alpha=1}^{m-1}
\frac{1}{n-1}\\
&=
n-1
+
\frac{m+1}{n-m-1}
+
\frac{m-1}{n-1}.
\end{aligned}
\]
Also,
\[
\begin{aligned}
\tau(K_n-F)
&=
n^{n-2}
\left(
1-\frac{m+1}{n}
\right)
\left(
1-\frac{1}{n}
\right)^{m-1}\\
&=
n^{n-2}
\frac{n-m-1}{n}
\left(
\frac{n-1}{n}
\right)^{m-1}\\
&=
n^{n-m-2}
(n-m-1)(n-1)^{m-1}.
\end{aligned}
\]

The assumption $n\ge m+2$ ensures that $n-m-1>0$.
In particular, after deleting the $m$ edges incident to the center,
at least one edge incident to the center remains, and hence $K_n-F$
is connected.
\end{proof}

%%%%%%%%%%%%%%%%%%%%%%%%%%%%%%%%%%%%%%%%
\begin{proposition}
\label{prop:clique-defect}
Suppose that $H_F\cong K_s$ and
\[
s<n.
\]
Then
\[
Kf(K_n-F)
=
n-1+\frac{s(s-1)}{n-s}
\]
and
\[
\tau(K_n-F)
=
n^{n-s-1}(n-s)^{s-1}.
\]
\end{proposition}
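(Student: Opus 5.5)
The plan is to follow the pattern of the preceding examples: determine the Laplacian spectrum of the deletion graph $H_F\cong K_s$, pass it to the edge-defect matrix through Lemma \ref{lem:deletion-spectrum}, and then substitute into Theorems \ref{thm:kirchhoff-spanning-defect} and \ref{thm:spanning-tree-defect}. Here $p=\binom{s}{2}$ edges are deleted, $H_F$ has $q=s$ vertices and $c=1$ component. We also assume $s\ge 2$ so that $F\neq\varnothing$; the case $s=1$ is degenerate.

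\emph{Spectrum of $K_s$.} We have $L(K_s)=sI_s-J_s$. The all-one vector $\boldsymbol{1}_s$ lies in the kernel. Every $x\in\boldsymbol{1}_s^{\perp}$ satisfies $L(K_s)x=sx$, and this subspace has dimension $s-1$. So the positive Laplacian eigenvalues are $s$, with multiplicity $s-1$. Lemma \ref{lem:deletion-spectrum} then says that the eigenvalues of $Q$ are $s$, repeated $s-1$ times, together with $p-s+1$ zeros.

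\emph{Connectivity.} The hypothesis $s<n$ gives a vertex $w\notin V(F)$. In $K_n-F$, the vertex $w$ is adjacent to every other vertex, so $K_n-F$ is connected and the earlier theorems apply. This also agrees with Lemma \ref{lem:positive-defect-resolvent}, since $\theta=s<n$.

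\emph{Substitution.} Theorem \ref{thm:kirchhoff-spanning-defect} gives
\[
Kf(K_n-F)=n-1+(s-1)\frac{s}{n-s}=n-1+\frac{s(s-1)}{n-s}.
\]
Theorem \ref{thm:spanning-tree-defect} gives
\[
\tau(K_n-F)=n^{n-2}\left(\frac{n-s}{n}\right)^{s-1}=n^{n-s-1}(n-s)^{s-1}.
\]
There is no real obstacle here. The only points needing care are the multiplicity count from Lemma \ref{lem:deletion-spectrum}, namely that the zero eigenvalues of $Q$ contribute nothing, and the connectivity check that justifies using the earlier theorems.
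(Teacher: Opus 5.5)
Your proposal is correct and follows essentially the same route as the paper. Both arguments take the spectrum of $L(K_s)=sI_s-J_s$, transfer it to $Q$ via Lemma~\ref{lem:deletion-spectrum} (with the zero eigenvalues contributing nothing), substitute into Theorems~\ref{thm:kirchhoff-spanning-defect} and~\ref{thm:spanning-tree-defect}, and use a vertex outside $K_s$ to certify that $K_n-F$ is connected.
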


\begin{proof}
The Laplacian matrix of $K_s$ is
\[
L(K_s)=sI_s-J_s.
\]
The all-one vector gives the eigenvalue $0$.
If
\[
x\in\boldsymbol{1}_s^\perp,
\]
then
\[
J_sx=\boldsymbol{0},
\]
and hence
\[
L(K_s)x=sx.
\]
Thus the Laplacian eigenvalues of $K_s$ are
\[
0,\
\underbrace{s,\ldots,s}_{s-1\text{ times}}.
\]

Since
\[
|F|=|E(K_s)|=\binom{s}{2},
\]
Lemma \ref{lem:deletion-spectrum} shows that the nonzero eigenvalues of
the edge-defect matrix are
\[
\underbrace{s,\ldots,s}_{s-1\text{ times}},
\]
and the remaining eigenvalues are zero.
These zero eigenvalues do not contribute to the sum for the Kirchhoff
index and contribute only the factor $1$ to the product for the number
of spanning trees.

Therefore,
\[
\begin{aligned}
Kf(K_n-F)
&=
n-1+
\sum_{\alpha=1}^{s-1}
\frac{s}{n-s}\\
&=
n-1+\frac{s(s-1)}{n-s}.
\end{aligned}
\]
Also,
\[
\begin{aligned}
\tau(K_n-F)
&=
n^{n-2}
\left(
1-\frac{s}{n}
\right)^{s-1}\\
&=
n^{n-2}
\left(
\frac{n-s}{n}
\right)^{s-1}\\
&=
n^{n-s-1}(n-s)^{s-1}.
\end{aligned}
\]

Since $s<n$, there is at least one vertex outside $K_s$.
Even after all edges inside $K_s$ are deleted, every vertex of $K_s$
is adjacent to this outside vertex.
Thus $K_n-F$ is connected.
\end{proof}

%%%%%%%%%%%%%%%%%%%%%%%%%%%%%%%%%%%%%%%%
\subsection{Effective resistance for matching defects}
%%%%%%%%%%%%%%%%%%%%%%%%%%%%%%%%%%%%%%%%

We next give an explicit formula for the effective resistance between
two vertices in the case of a matching deletion.

%%%%%%%%%%%%%%%%%%%%%%%%%%%%%%%%%%%%%%%%
\begin{proposition}
\label{prop:matching-resistance}
Let $G=K_n-F$, and suppose that
\[
H_F\cong mK_2.
\]
Then, for two distinct vertices $x,y\in V(G)$,
\[
R_G(x,y)
=
\frac{2}{n}
+
\frac{\nu_F(x,y)}{n(n-2)},
\]
where
\[
\nu_F(x,y)
=
\begin{cases}
4,
& \text{if } x \text{ and } y \text{ are the two ends of the same deleted edge},\\
2,
& \text{if } x \text{ and } y \text{ are ends of different deleted edges},\\
1,
& \text{if exactly one of } x,y \text{ is an end of a deleted edge},\\
0,
& \text{if neither } x \text{ nor } y \text{ is an end of a deleted edge}.
\end{cases}
\]
\end{proposition}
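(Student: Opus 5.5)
The plan is to apply Theorem \ref{thm:edge-defect-resolvent} directly. By Proposition \ref{prop:matching-characterization}, $H_F\cong mK_2$ gives $Q=2I_m$ (here $p=m$). Hence
\[
(nI_m-Q)^{-1}=\frac{1}{n-2}I_m ,
\]
which is well defined because connectivity forces $n>2$; alternatively, invoke Lemma \ref{lem:positive-defect-resolvent}. Theorem \ref{thm:edge-defect-resolvent} then reduces to
\[
R_G(x,y)=\frac{2}{n}+\frac{\|\eta_{xy}\|^2}{n(n-2)} .
\]
So it suffices to show that $\|\eta_{xy}\|^2=\nu_F(x,y)$ in each of the four cases.

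Next, write out the components of $\eta_{xy}$. Its $\alpha$-th component is
\[
(\eta_{xy})_\alpha=b_\alpha^T(e_x-e_y)=(b_\alpha)_x-(b_\alpha)_y .
\]
Because $F$ is a matching, every vertex is an endpoint of at most one deleted edge. Therefore each vertex $x$ contributes a nonzero entry $\pm1$ to at most one column, namely the column of the deleted edge containing $x$, if there is one.

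Now run through the cases.

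\begin{itemize}
\item If $x$ and $y$ are the two ends of the same edge $e_\alpha$, then $(\eta_{xy})_\alpha=\pm(1-(-1))=\pm2$ and every other component vanishes. This gives $\|\eta_{xy}\|^2=4$.
\item If $x\in e_\alpha$ and $y\in e_\beta$ with $\alpha\ne\beta$, then $\eta_{xy}$ has entries $\pm1$ in positions $\alpha$ and $\beta$ and zeros elsewhere. This gives $\|\eta_{xy}\|^2=2$.
\item If exactly one of $x,y$ is covered by $F$, there is a single entry $\pm1$, so $\|\eta_{xy}\|^2=1$.
\item If neither $x$ nor $y$ is covered, then $\eta_{xy}=\boldsymbol{0}$, so $\|\eta_{xy}\|^2=0$.
\end{itemize}

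The only step needing care is this bookkeeping of signs and positions, which relies on the disjointness of matching edges. There is no genuine obstacle; orientation choices affect only the signs of the entries, and those disappear when the entries are squared.
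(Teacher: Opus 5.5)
Your proposal is correct and follows essentially the same route as the paper: you use Theorem \ref{thm:edge-defect-resolvent} with $Q=2I_m$ to reduce to $R_G(x,y)=\frac{2}{n}+\frac{\|\eta_{xy}\|^2}{n(n-2)}$, and then compute $\|\eta_{xy}\|^2$ case by case using the disjointness of the matching edges. The only cosmetic difference is that you obtain $Q=2I_m$ by citing Proposition \ref{prop:matching-characterization}, whereas the paper recomputes $b_\alpha^Tb_\beta$ directly.
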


\begin{proof}
Let
\[
F=\{e_1,\ldots,e_m\}
\]
be the set of deleted edges, and give each edge an arbitrary orientation.
Write
\[
e_\alpha=\{u_\alpha,v_\alpha\},
\]
and put
\[
b_\alpha
=
e_{u_\alpha}-e_{v_\alpha}.
\]
Let
\[
B=
\begin{bmatrix}
b_1&\cdots&b_m
\end{bmatrix},
\qquad
Q=B^TB.
\]

Since
\[
H_F\cong mK_2,
\]
the deleted edges are pairwise disjoint.
Thus
\[
b_\alpha^Tb_\alpha=2
\]
and, for $\alpha\ne\beta$,
\[
b_\alpha^Tb_\beta=0.
\]
Hence
\[
Q=2I_m,
\]
and so
\[
(nI_m-Q)^{-1}
=
\frac{1}{n-2}I_m.
\]

Put
\[
\eta_{xy}
=
B^T(e_x-e_y).
\]
By Theorem \ref{thm:edge-defect-resolvent},
\[
\begin{aligned}
R_G(x,y)
&=
\frac{2}{n}
+
\frac{1}{n}
\eta_{xy}^T
(nI_m-Q)^{-1}
\eta_{xy}\\
&=
\frac{2}{n}
+
\frac{1}{n(n-2)}
\eta_{xy}^T\eta_{xy}\\
&=
\frac{2}{n}
+
\frac{\|\eta_{xy}\|^2}{n(n-2)}.
\end{aligned}
\]
It remains to compute $\|\eta_{xy}\|^2$.

The $\alpha$-th component of $\eta_{xy}$ is
\[
\begin{aligned}
(\eta_{xy})_\alpha
&=
b_\alpha^T(e_x-e_y)\\
&=
(e_{u_\alpha}-e_{v_\alpha})^T(e_x-e_y)\\
&=
\delta_{u_\alpha x}
-\delta_{u_\alpha y}
-\delta_{v_\alpha x}
+\delta_{v_\alpha y}.
\end{aligned}
\]

If $x$ and $y$ are the two ends of the same deleted edge $e_\alpha$,
then
\[
(\eta_{xy})_\alpha=\pm2,
\]
and all other components are zero. Hence
\[
\|\eta_{xy}\|^2=4.
\]

If $x$ and $y$ are ends of two different deleted edges, say
$e_\alpha$ and $e_\beta$, then
\[
(\eta_{xy})_\alpha=\pm1,
\qquad
(\eta_{xy})_\beta=\pm1,
\]
and all other components are zero. Hence
\[
\|\eta_{xy}\|^2=2.
\]

If exactly one of $x,y$ is an end of a deleted edge, then exactly one
component of $\eta_{xy}$ is equal to $\pm1$, and all other components
are zero. Hence
\[
\|\eta_{xy}\|^2=1.
\]

Finally, if neither $x$ nor $y$ is an end of a deleted edge, then
\[
\eta_{xy}=\boldsymbol{0},
\]
and so
\[
\|\eta_{xy}\|^2=0.
\]
Thus
\[
\|\eta_{xy}\|^2=\nu_F(x,y),
\]
and the desired formula follows.
\end{proof}

%%%%%%%%%%%%%%%%%%%%%%%%%%%%%%%%%%%%%%%%
\subsection{Path and cycle defects}
%%%%%%%%%%%%%%%%%%%%%%%%%%%%%%%%%%%%%%%%

Define the polynomial sequence $\{D_k(x)\}_{k\ge0}$ by
\[
D_0(x)=1, 
\qquad
D_1(x)=x, 
\quad
D_k(x)
=xD_{k-1}(x)-D_{k-2}(x)
\qquad
(k\ge2).
\]
This sequence is written in terms of the Chebyshev polynomials of the
second kind as
\[
D_k(x)
=
U_k\left(\frac{x}{2}\right).
\]
Thus the roots of $D_m(x)$ are
\[
2\cos\left(
\frac{j\pi}{m+1}
\right),
\qquad
1\le j\le m.
\]

%%%%%%%%%%%%%%%%%%%%%%%%%%%%%%%%%%%%%%%%
\begin{proposition}
\label{prop:path-defect}
Suppose that $H_F\cong P_{m+1}$.
Then
\[
\tau(K_n-F)
=
n^{n-m-2}D_m(n-2)
\]
and
\[
Kf(K_n-F)
=
n-1-m
+
n\frac{D_m'(n-2)}{D_m(n-2)}.
\]
\end{proposition}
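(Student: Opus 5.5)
Since $H_F\cong P_{m+1}$ is a tree, it has $q=m+1$ vertices, $p=m$ edges and $c=1$ component. Lemma \ref{lem:deletion-spectrum} therefore gives that $Q$ has no zero eigenvalues, and its eigenvalues are exactly the $m$ positive Laplacian eigenvalues of $P_{m+1}$. The cleanest way to use this is to work directly with the $m\times m$ matrix $Q$ itself.

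First, I will orient every edge of the path in the same direction, $v_{i-1}\to v_i$ for $i=1,\ldots,m$, and compute $Q=B^TB$ explicitly. Consecutive edges share exactly one vertex, which is the head of one edge and the tail of the next. Hence Lemma \ref{lem:basic-Q} gives $Q_{\alpha\alpha}=2$, $Q_{\alpha,\alpha+1}=Q_{\alpha+1,\alpha}=-1$, and all other entries zero. So $Q$ is the tridiagonal matrix with diagonal $2$ and off-diagonal $-1$. Consequently $nI_m-Q$ is tridiagonal with diagonal $n-2$ and off-diagonals $+1$.

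Next, I will expand $\det(xI_k-T_k)$ along the last row, where $T_k$ denotes this tridiagonal matrix with zero diagonal and off-diagonal $-1$ (so $Q=2I+T$). Writing $E_k(x)$ for the $k\times k$ determinant with diagonal $x$ and off-diagonal entries $1$, cofactor expansion along the last row gives $E_k=xE_{k-1}-E_{k-2}$, with $E_0=1$ and $E_1=x$. Thus $E_k=D_k$, and in particular $\det(nI_m-Q)=D_m(n-2)$. Theorem \ref{thm:spanning-tree-defect} with $p=m$ then gives $\tau(K_n-F)=n^{n-m-2}D_m(n-2)$.

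For the Kirchhoff index, I will rewrite Theorem \ref{thm:kirchhoff-spanning-defect} using $\frac{\theta}{n-\theta}=\frac{n}{n-\theta}-1$:
\[
Kf(K_n-F)=n-1-m+n\sum_{\alpha=1}^{m}\frac{1}{n-\theta_\alpha}.
\]
Set $g(t)=\det((t+2)I_m-Q)=\prod_\alpha(t+2-\theta_\alpha)$. The same determinant computation, with $n$ replaced by $t+2$, shows $g(t)=D_m(t)$ as polynomials. Logarithmic differentiation gives $\frac{g'(t)}{g(t)}=\sum_\alpha\frac{1}{t+2-\theta_\alpha}$. Evaluating at $t=n-2$ yields $\sum_\alpha\frac{1}{n-\theta_\alpha}=\frac{D_m'(n-2)}{D_m(n-2)}$, which is the claimed formula.

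The evaluation is legitimate because connectivity (the standing assumption of this section) together with Lemma \ref{lem:positive-defect-resolvent} gives $n-\theta_\alpha>0$, so $D_m(n-2)=\det(nI_m-Q)\neq0$. The only step needing real care is the determinant recursion. In particular, the identification must hold as a polynomial identity in $t$, not just at $t=n-2$, so that differentiation is valid. The sign convention is harmless here, since the off-diagonal entries enter the recursion only through their squares.
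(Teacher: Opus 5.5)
Your proof is correct, but it takes a different route from the paper.

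The paper never writes $Q$ down. It invokes Lemma \ref{lem:deletion-spectrum} to identify the eigenvalues of $Q$ with the known Laplacian eigenvalues $\lambda_j=2-2\cos(j\pi/(m+1))$ of $P_{m+1}$. It then uses the Chebyshev factorization $D_m(x)=\prod_{j=1}^{m}\bigl(x-2\cos(j\pi/(m+1))\bigr)$. Finally, it needs the sign symmetry of the set $\{\cos(j\pi/(m+1))\}$ to turn $\prod_j\bigl(n-2-2\cos(j\pi/(m+1))\bigr)$ into $\prod_j(n-\lambda_j)$. That symmetry is used once for $\tau$ and again for the logarithmic derivative.

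You instead fix a coherent orientation, so that $Q$ is the tridiagonal matrix with $2$ on the diagonal and $-1$ off it. The three-term cofactor recursion then gives the polynomial identity $\det\bigl((t+2)I_m-Q\bigr)=D_m(t)$ directly. This makes $D_m$ literally the characteristic polynomial of $Q$ shifted by $2$. Both the determinant formula and the logarithmic-derivative formula then follow without knowing any eigenvalues, without the Chebyshev root formula, and without the sign-symmetry rearrangement. Your appeal to Lemma \ref{lem:deletion-spectrum} is not even needed: Theorem \ref{thm:spanning-tree-defect} with $p=m$ and $g(t)=\prod_{\alpha=1}^{m}(t+2-\theta_\alpha)$ work whether or not $Q$ has zero eigenvalues.

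What each route buys: the paper's approach exhibits the explicit spectrum of $Q$, which gives closed trigonometric sums and parallels the cycle case. Yours is more self-contained, and it explains why the recursion defining $D_k$ is the natural one for path deletions.
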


\begin{proof}
The Laplacian eigenvalues of $P_{m+1}$ are
\[
2-2\cos\left(
\frac{j\pi}{m+1}
\right),
\qquad
0\le j\le m.
\]
Hence the positive Laplacian eigenvalues are
\[
\lambda_j
=
2-2\cos\left(
\frac{j\pi}{m+1}
\right),
\qquad
1\le j\le m.
\]

On the other hand, since the roots of $D_m(x)$ are
\[
2\cos\left(
\frac{j\pi}{m+1}
\right),
\qquad
1\le j\le m,
\]
we have
\[
D_m(x)
=
\prod_{j=1}^{m}
\left(
x-
2\cos\left(
\frac{j\pi}{m+1}
\right)
\right).
\]
Therefore,
\[
D_m(n-2)
=
\prod_{j=1}^{m}
\left(
n-2-
2\cos\left(
\frac{j\pi}{m+1}
\right)
\right).
\]
Since the set
\[
\left\{
\cos\left(
\frac{j\pi}{m+1}
\right)
:
1\le j\le m
\right\}
\]
is invariant under changing the sign, we may rearrange the factors and get
\[
\begin{aligned}
D_m(n-2)
&=
\prod_{j=1}^{m}
\left(
n-2+
2\cos\left(
\frac{j\pi}{m+1}
\right)
\right)\\
&=
\prod_{j=1}^{m}(n-\lambda_j).
\end{aligned}
\]

By Theorem \ref{thm:spanning-tree-defect},
\[
\begin{aligned}
\tau(K_n-F)
&=
n^{n-2}
\prod_{j=1}^{m}
\left(
1-\frac{\lambda_j}{n}
\right)\\
&=
n^{n-2}
\prod_{j=1}^{m}
\frac{n-\lambda_j}{n}\\
&=
n^{n-m-2}
\prod_{j=1}^{m}(n-\lambda_j)\\
&=
n^{n-m-2}D_m(n-2).
\end{aligned}
\]

Next, we compute the Kirchhoff index.
Taking the logarithmic derivative of the product expression for $D_m(x)$,
we obtain
\[
\frac{D_m'(x)}{D_m(x)}
=
\sum_{j=1}^{m}
\frac{1}{
x-
2\cos\left(
\dfrac{j\pi}{m+1}
\right)
}.
\]
Using again the sign symmetry of the cosine values, we have, at $x=n-2$,
\[
\frac{D_m'(n-2)}{D_m(n-2)}
=
\sum_{j=1}^{m}
\frac{1}{n-\lambda_j}.
\]
Since
\[
\frac{\lambda_j}{n-\lambda_j}
=
\frac{n}{n-\lambda_j}-1,
\]
we obtain
\[
\begin{aligned}
Kf(K_n-F)
&=
n-1+
\sum_{j=1}^{m}
\frac{\lambda_j}{n-\lambda_j}\\
&=
n-1+
\sum_{j=1}^{m}
\left(
\frac{n}{n-\lambda_j}-1
\right)\\
&=
n-1-m
+
n\sum_{j=1}^{m}
\frac{1}{n-\lambda_j}\\
&=
n-1-m
+
n\frac{D_m'(n-2)}{D_m(n-2)}.
\end{aligned}
\]
\end{proof}

%%%%%%%%%%%%%%%%%%%%%%%%%%%%%%%%%%%%%%%%
\begin{proposition}
\label{prop:cycle-defect}
Let $m\ge3$, and suppose that
\[
H_F\cong C_m.
\]
Then
\[
Kf(K_n-F)
=
n-1+
\sum_{j=1}^{m-1}
\frac{
2-2\cos\left(\dfrac{2\pi j}{m}\right)
}{
n-2+2\cos\left(\dfrac{2\pi j}{m}\right)
}
\]
and
\[
\tau(K_n-F)
=
n^{n-2}
\prod_{j=1}^{m-1}
\left(
1-
\frac{
2-2\cos\left(\dfrac{2\pi j}{m}\right)
}{n}
\right).
\]
\end{proposition}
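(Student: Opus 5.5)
The plan is to compute the Laplacian spectrum of the deletion graph $C_m$ and then substitute it into the general formulas of the previous sections, as in the preceding examples. Here $H_F\cong C_m$ has $q=m$ vertices, $p=m$ edges and $c=1$ component. By Lemma \ref{lem:deletion-spectrum}, the eigenvalues of $Q$ are therefore the $m-1$ positive Laplacian eigenvalues of $C_m$, together with exactly $p-q+c=1$ zero eigenvalue. The zero eigenvalue contributes nothing to the sum in Theorem \ref{thm:kirchhoff-spanning-defect} and only a factor $1$ to the product in Theorem \ref{thm:spanning-tree-defect}.

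The first step is the spectrum of $C_m$. Label the vertices by $\mathbb{Z}/m\mathbb{Z}$, so that $L(C_m)=2I_m-P-P^{T}$, where $P$ is the cyclic shift. For each $j=0,\ldots,m-1$, the Fourier vector $w_j=(\omega^{jk})_{k}$ with $\omega=e^{2\pi i/m}$ satisfies $Pw_j=\omega^{j}w_j$. Hence
\[
L(C_m)w_j=\bigl(2-\omega^{j}-\omega^{-j}\bigr)w_j=\Bigl(2-2\cos\frac{2\pi j}{m}\Bigr)w_j .
\]
These $m$ vectors are linearly independent, being the columns of a Vandermonde/DFT matrix. So the Laplacian eigenvalues are $\lambda_j=2-2\cos(2\pi j/m)$ for $0\le j\le m-1$, and they are real because $L(C_m)$ is symmetric. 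Only $j=0$ gives zero, since $\cos(2\pi j/m)=1$ forces $m\mid j$. The positive eigenvalues are thus $\lambda_1,\ldots,\lambda_{m-1}$, with multiplicities correctly accounted for because we have a full eigenbasis.

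The second step is substitution. Because $K_n-F$ is connected (the standing assumption of this section), Lemma \ref{lem:positive-defect-resolvent} gives $\lambda_j<n$, so the denominators are nonzero. Theorem \ref{thm:kirchhoff-spanning-defect} then yields
\[
Kf(K_n-F)=n-1+\sum_{j=1}^{m-1}\frac{\lambda_j}{n-\lambda_j},
\qquad
n-\lambda_j=n-2+2\cos\frac{2\pi j}{m},
\]
which is the stated Kirchhoff index formula. Theorem \ref{thm:spanning-tree-defect} gives $\tau(K_n-F)=n^{n-2}\prod_{j=1}^{m-1}(1-\lambda_j/n)$ directly.

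I expect no real obstacle. The only point needing care is the multiplicity bookkeeping for repeated eigenvalues, since $\lambda_j=\lambda_{m-j}$. This is handled by the explicit eigenbasis and the zero-eigenvalue count in Lemma \ref{lem:deletion-spectrum}.
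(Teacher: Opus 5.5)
Your proposal is correct and follows the paper's proof. Both identify the positive Laplacian eigenvalues $2-2\cos(2\pi j/m)$, $1\le j\le m-1$, of $C_m$ and substitute them, via Lemma \ref{lem:deletion-spectrum}, into Theorems \ref{thm:kirchhoff-spanning-defect} and \ref{thm:spanning-tree-defect}; the only difference is that you derive the cycle spectrum from the Fourier eigenbasis, whereas the paper simply quotes it.
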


\begin{proof}
The Laplacian eigenvalues of $C_m$ are
\[
\lambda_j
=
2-2\cos\left(
\frac{2\pi j}{m}
\right),
\qquad
0\le j\le m-1.
\]
Since $\lambda_0=0$, the positive Laplacian eigenvalues are
\[
\lambda_j
=
2-2\cos\left(
\frac{2\pi j}{m}
\right),
\qquad
1\le j\le m-1.
\]

Therefore, by Lemma \ref{lem:deletion-spectrum} and Theorem
\ref{thm:kirchhoff-spanning-defect},
\[
\begin{aligned}
Kf(K_n-F)
&=
n-1+
\sum_{j=1}^{m-1}
\frac{\lambda_j}{n-\lambda_j}\\
&=
n-1+
\sum_{j=1}^{m-1}
\frac{
2-2\cos\left(\dfrac{2\pi j}{m}\right)
}{
n-2+2\cos\left(\dfrac{2\pi j}{m}\right)
}.
\end{aligned}
\]
Also, by Theorem \ref{thm:spanning-tree-defect},
\[
\begin{aligned}
\tau(K_n-F)
&=
n^{n-2}
\prod_{j=1}^{m-1}
\left(
1-\frac{\lambda_j}{n}
\right)\\
&=
n^{n-2}
\prod_{j=1}^{m-1}
\left(
1-
\frac{
2-2\cos\left(\dfrac{2\pi j}{m}\right)
}{n}
\right).
\end{aligned}
\]
\end{proof}

%%%%%%%%%%%%%%%%%%%%%%%%%%%%%%%%%%%%%%%%
\begin{remark}
\label{rem:cycle-connection}
In particular, when $m=n\ge5$, we have
\[
K_n-F=K_n-C_n.
\]
This connects the present result with the study of cycle deletion by
Chair \cite{Chair2012}, distance-class deletion by Tamura
\cite{Tamura2026}, and circulant distance deletion by Tamura and Tanaka
\cite{TamuraTanaka2026}.
A feature of the present method is that the same framework can be applied
also to deletion graphs without assuming cyclic symmetry.
\end{remark}

%%%%%%%%%%%%%%%%%%%%%%%%%%%%%%%%%%%%%%%%
\section{Conclusion and future problems}
\label{sec:conclusion}
%%%%%%%%%%%%%%%%%%%%%%%%%%%%%%%%%%%%%%%%

In this paper, we studied the connected graph
\[
G=K_n-F
\]
obtained from the complete graph $K_n$ by deleting a set $F$ of
$p$ edges.
Letting $B$ be the incidence matrix of the deleted edge set and defining
\[
Q=B^TB
\]
as the edge-defect matrix, we reduced the problems on effective resistance,
the Kirchhoff index, and the number of spanning trees to problems on a
$p\times p$ matrix corresponding to the number of deleted edges.
In particular, we derived a resolvent formula for the effective resistance,
and described the Kirchhoff index and the number of spanning trees
uniformly in terms of the eigenvalues of $Q$.

As a central result of this paper, we obtained the stability identity
\[
\begin{aligned}
Kf(K_n-F)
&=n-1+\frac{2p}{n-2}+
\frac{n}{(n-2)^2}\sum_{\alpha=1}^{p}\frac{(\theta_\alpha-2)^2}{n-\theta_\alpha},
\end{aligned}
\]
where
\[
\theta_1,\ldots,\theta_p
\]
are the eigenvalues of $Q$.
This identity expresses the excess from the Xu--Das--Zhang type lower bound
in terms of the deviations of the eigenvalues of the edge-defect matrix
from $2$.
Moreover, the correction term vanishes if and only if the deleted edge set
$F$ is a matching.

Furthermore, by using majorization, we determined the structure which
minimizes the Kirchhoff index among non-matching deletion graphs under
the assumptions $p\ge2$ and
\[
n\ge\max\{4,2p-1\}.
\]
The minimum is attained only when
\[
H_F\cong P_3\cup(p-2)K_2.
\]
In other words, among all non-matching deleted edge sets, the minimum is
given by the structure in which exactly one pair of deleted edges shares
a vertex.
Also, when $n\ge2p$, the difference between the matching minimum and the
non-matching minimum is
\[
\frac{2n}{(n-1)(n-2)(n-3)}.
\]
This gives the first stability gap.
In particular, this gap is independent of the number $p$ of deleted edges.

Finally, we gave explicit formulas for the Kirchhoff index and the number
of spanning trees for several typical deletion graphs, such as matchings,
stars, cliques, paths, and cycles.
For matching deletions, we also gave an explicit formula for the effective
resistance between each pair of vertices.
These examples show that the edge-defect matrix method works effectively
also for general deleted edge sets without cyclic symmetry.

A natural future problem is to give a further ranking of the graphs $K_n-F$
with small Kirchhoff index for a fixed number $p$ of deleted edges.
In this paper, we determined the minimum attained by a matching deletion
and the minimum among non-matching deleted edge sets.
The next problem is to classify the deletion graphs which give the next
smallest values after
\[
P_3\cup(p-2)K_2,
\]
and to determine the second, third, and further stability gaps together
with their equality conditions.

It would also be natural to use the edge-defect matrix framework to study
extremal problems for the number of spanning trees under a fixed number
of deleted edges, or to rank effective resistances between specified pairs
of vertices.
%Another possible direction is to consider weighted complete graphs,
%or edge deletions from dense graphs other than complete graphs,
%and to study whether similar reduction formulas and stability phenomena
%hold in those settings.

%%%%%%%%%%%%%%%%%%%%%%%%%%%%%%%%%%%%%%%%
\section*{Funding}
%%%%%%%%%%%%%%%%%%%%%%%%%%%%%%%%%%%%%%%%
This research received no external funding.

%%%%%%%%%%%%%%%%%%%%%%%%%%%%%%%%%%%%%%%%
\section*{Data availability}
%%%%%%%%%%%%%%%%%%%%%%%%%%%%%%%%%%%%%%%%
No new data were created or analyzed in this study.
Data sharing is not applicable to this article.

%%%%%%%%%%%%%%%%%%%%%%%%%%%%%%%%%%%%%%%%
\section*{Conflicts of interest}
%%%%%%%%%%%%%%%%%%%%%%%%%%%%%%%%%%%%%%%%
The author declares no conflict of interest.

%%%%%%%%%%%%%%%%%%%%%%%%%%%%%%%%%%%%%%%%%%%%%%%%%%
%%%%%%%%%%%%%%%% References %%%%%%%%%%%%%%%%%%%%%%
%%%%%%%%%%%%%%%%%%%%%%%%%%%%%%%%%%%%%%%%%%%%%%%%%%

\end{document}